\newtheorem{theorem}{Theorem}[section]
\newtheorem{corollary}[theorem]{Corollary}
\newtheorem{lemma}[theorem]{Lemma}
\newtheorem{observation}[theorem]{Observation}
\newtheorem{definition}[theorem]{Definition}
\newtheorem{remark}[theorem]{Remark}
\newtheorem*{claim*}{Claim}
\newcommand{\E}{\mathbb{E}}
\newcommand{\R}{\mathbb{R}}
\newcommand{\C}{\mathbb{C}}
\newcommand{\Z}{\mathbb{Z}}
\newcommand{\ind}{\mathbf{1}}
\newcommand{\vol}{\mathsf{vol}}
\newcommand{\disc}{\mathsf{disc}}
\newcommand{\wdisc}{\mathsf{wdisc}}
\newcommand{\ignore}[1]{{}}
\newcommand{\err}{\mathsf{err}}
\newcommand{\SO}{\mathsf{SO}}
\newcommand{\WSO}{\mathsf{WSO}}
\newcommand{\HK}{\mathsf{HK}}
\newcommand{\F}{\mathcal{F}}
\newcommand{\D}{\mathcal{D}}
\newcommand{\nnz}{\mathsf{nnz}}
\newcommand{\SubgVecBal}{\mathsf{BalSubgDisc}}
\newcommand{\wSubgTrans}{\mathsf{WSubgTrans}}
\newcommand{\SubgTrans}{\mathsf{SubgTrans}}
\newcommand{\bfd}{\mathbf{d}}
\newcommand{\bfwd}{\mathbf{wd}}
\newcommand{\bfs}{\mathbf{s}}
\newcommand{\bfj}{\mathbf{j}}
\newcommand{\bx}{\boldsymbol{x}}
\newcommand{\bv}{\boldsymbol{v}}
\newcommand{\bz}{\boldsymbol{z}}
\newcommand{\bk}{\boldsymbol{k}}
\newcommand{\ba}{\boldsymbol{a}}
\newcommand{\bw}{\boldsymbol{w}}
\newcommand{\be}{\boldsymbol{e}}
\newcommand{\bs}{\boldsymbol{s}}
\newcommand{\bc}{\boldsymbol{c}}
\newcommand{\by}{\boldsymbol{y}}
\title{High-Dimensional Quasi-Monte Carlo\\ via Combinatorial Discrepancy}
\author{Jiaheng Chen\thanks{University of Chicago, Chicago, IL, USA. \texttt{jiaheng@uchicago.edu}.} 
\and
Haotian Jiang\thanks{University of Chicago, Chicago, IL, USA. \texttt{jhtdavid@uchicago.edu}.}
\and
Nathan Kirk\thanks{Illinois Institute of Technology, Chicago, IL, USA. \texttt{nkirk@illinoistech.edu}.}
}
\date{}
\begin{document}

\maketitle
 \begin{abstract}
 \thispagestyle{empty}
Monte Carlo (MC) and Quasi-Monte Carlo (QMC) methods are classical approaches for the numerical integration of functions $f$ over $[0,1]^d$. While QMC methods can achieve faster convergence rates than MC in moderate dimensions, their tractability in high dimensions typically relies on additional structure---such as low effective dimension or carefully chosen coordinate weights---since worst-case error bounds grow prohibitively large as $d$ increases. 

In this work, we study the construction of high-dimensional QMC point sets via combinatorial discrepancy, extending the recent QMC method of Bansal and Jiang \cite{BJ25}. We establish error bounds for these constructions in weighted function spaces, and for functions with low effective dimension in both the superposition and truncation sense. We also present numerical experiments to empirically assess the performance of these constructions.
\end{abstract}

{\hypersetup{linkcolor=BrickRed}
}
 \thispagestyle{empty}

\maketitle

\setcounter{page}{1}
 \allowdisplaybreaks

\section{Introduction}

High-dimensional integration appears in statistics \cite{Gen93, Gen92, FanWan94}, quantum physics \cite{quantumMC2011}, financial mathematics \cite{wangsloan05, LEc09, paskov97}, uncertainty quantification in engineering \cite{Kei96, MorCaf95} and computer graphics and vision \cite{Vea97, paulin2022, kellprem12}, among many others. After suitable transformations, many such problems
take the form
\[
\mu := \int_{[0,1]^d} f(\bx) \, d\bx,
\]
where the dimension \( d \) is often large. Classical numerical methods, such as Gaussian quadrature or trapezoidal integration, can be highly efficient in low-dimensional settings but suffer from the curse of dimensionality, with the number of required points growing exponentially in \( d \). 

\emph{Monte Carlo (MC) methods} \cite{Lem09} avoid this by sampling \( n \) independent and identically distributed (IID) points \( \bx_1, \ldots, \bx_n \) uniformly from \([0,1]^d\) and approximate the integral $\mu$ as
\[
\hat{\mu}_n := \frac{1}{n} \sum_{i=1}^n f(\bx_i).
\]
The error of this estimator decays like $\sigma(f)/\sqrt{n}$, independent of the dimension $d$,
where \( \sigma(f) \) denotes the standard deviation of \( f \). 

\emph{Quasi-Monte Carlo (QMC) methods} \cite{HICKKIRKSOR2025, DicEtal14a, leopill14} aim to improve upon MC by replacing random samples with carefully constructed deterministic sampling nodes, known as low-discrepancy points sets and sequences, that are more evenly spread over \([0,1]^d\). The convergence rates of QMC methods are captured by one of several variants of the Koksma-Hlawka inequalities \cite{Kok42,Hla61}, which bounds the integration error as
\begin{align} \label{eq:koksma-hlawka}
|\hat{\mu}_n - \mu| \leq D(\{\bx_i\}_{i=1}^n) \cdot V_{\HK}(f),
\end{align}
where \( V_{\HK}(f) \) is the variation of \( f \) in the sense of Hardy and Krause \cite{Owe05a}, and \( D(\cdot) \) defined as 
\begin{equation}\label{eq:star_disc}
    D(\{
    \bx_i\}_{i=1}^n) := \sup_{\ba \in [0,1]^d} \left| \frac{1}{n} \sum_{i=1}^n \boldsymbol{1}\{\bx_i \in [\boldsymbol{0},\ba)\} - |[\boldsymbol{0},\ba)| \right|
\end{equation}
measures the uniformity of the sampling nodes with respect to axis-aligned origin-anchored test boxes, and is referred to as the {\em star discrepancy.}

Over the years, numerous families of low-discrepancy point sets have been proposed, with the two main families being digital nets and sequences \cite{Nie92, DP10} and lattice rules \cite{SloJoe94,DicEtal22a}, with more recent approaches leveraging optimization and machine learning techniques to minimize the star discrepancy \cite{ruschkirk24,cle25}. For a given low-discrepancy point set $\{\bx_i\}_{i=1}^n$, one can bound its star discrepancy by $C(\log n)^{d-1}/n$ for some constant $C$ depending on the specific construction. This implies a deterministic integration error of the same order whenever \( V_{\HK}(f) \) is finite. Compared with the MC error rate $O(n^{-1/2})$, the dependence on $n$ is asymptotically superior provided the dimension $d$ is not too large.

\subsection{Beyond Hardy-Krause via Combinatorial Discrepancy} 
In practice, the variation \( V_{\HK}(f) \) can be orders of magnitude larger than $\sigma(f)$, especially for functions with oscillatory or high-frequency components. Worse, the classical Koksma-Hlawka \eqref{eq:koksma-hlawka} bound ties QMC performance to \( V_{\HK}(f) \), so even implementing sampling node set with low-discrepancy may yield a pessimistic bound. Furthermore, classical QMC requires a deterministic node construction and cannot be readily applied when only random samples are available.

To address these issues, recent work by Bansal and Jiang \cite{BJ25} gives a randomized QMC construction that offers significantly tighter error bounds---they replace $V_{\HK}(f)$ in \eqref{eq:koksma-hlawka} with a ``smoothed-out variation'', $\sigma_{\SO}(f)$, which is substantially smaller for many practical integrands, especially those with substantial high-frequency component. 
Their QMC method is based on the notion of {\em combinatorial discrepancy}, that quantifies the necessary imbalance in any $\pm 1$ coloring of a finite point set with respect to a family of subsets \( \mathcal{F} \subseteq 2^{[n]}\), and is formally defined as
\[
\operatorname{disc}(\{ \bx_i \}_{i=1}^n, \mathcal{F}) = \min_{\chi: \{ \bx_i \}_{i=1}^n\to \{\pm 1\}} \max_{F \in \mathcal{F}} \Bigg| \sum_{i \in F} \chi(\bx_i) \Bigg|.
\]
Despite the different-looking definition, combinatorial discrepancy is known to be closely connected to star discrepancy through the {\em transference principle} (e.g., see \cite{Mat09,AistleitnerBN17}) --- which, in the QMC context, states that low-discrepancy point sets can be obtained from colorings with small combinatorial discrepancy with respect to anchored boxes (see Lemma \ref{lem:transference}, \Cref{subsubsec:disc_transference_subg}). 
Leveraging the latest algorithmic developments in combinatorial discrepancy theory in the transference principle, \cite{BJ25} obtained their QMC construction with the improved error bounds.

\subsection{Quasi-Monte Carlo in High Dimension}
\label{sec:qmc_high_dim}

The discussion so far has assumed fixed $d$ and sufficiently large \( n \), where QMC is known to beat MC in convergence rate. In high dimensions, however, the discrepancy bound \( (\log n)^d/n \) grows rapidly with $d$, making the QMC error bound substantially worse than the MC rate of \( O(n^{-1/2}) \);
see \cite[Section 5.3]{HICKKIRKSOR2025}. This led to the long-held belief that QMC was unsuitable for moderate- or high-dimensional problems (e.g., $d>10$). Surprisingly, empirical results from the 1990s in computational finance contradicted this view: studies such as \cite{PasTra95,PapTra97} reported QMC significantly outperforming MC for integrals with $d$ in the hundreds.

\smallskip
{\bf Effective Dimension and Weighted Function Spaces.} To explain this, the concept of \emph{effective dimension} emerged \cite{PasTra95,CafMor96,paskov97}, recognizing that many high-dimensional integrands depend strongly only on a few variables or low-order interactions. 
They have been formalized into two different notions of effective dimension---in the \emph{truncation} sense and in the \emph{superposition} sense (see \Cref{sec:effective_dimension} for the formal definitions). 

Around the same time, a rigorous framework arose from weighted function spaces \cite{SloWoz98}, which build variable importance directly into the analysis. In years since, many works exist in this setting, e.g., \cite{DicEtal06,SLOAN2001697,GNEWUCH201429}, where a collection of non-negative weights $\gamma_u$, one for each subset $u \subseteq \{1,2,\ldots,d\}$, quantify the relative importance of variable subsets. Larger $\gamma_u$ allow greater variation in that projection, so uniformity in sampling locations matters more there.  Since specifying all $2^d-1$ weights is infeasible for large $d$, they are typically parametrized. The most common choice of parametrization is that of product weights where each coordinate $1\leq j\leq d$ is assigned a weight $\gamma_j>0$ and $\gamma_u = \prod_{j\in u} \gamma_j$. It is usual to assume $\gamma_1 \geq \gamma_2 \geq \cdots \gamma_d \geq 0$, and tractability theory shows that under suitable decay of $\gamma_j$, QMC can achieve convergence rates independent of $d$ \cite{Sloan2002}.

\smallskip
\noindent \textbf{Existing High Dimensional QMC Constructions.} Many strategies have been developed for constructing QMC sampling nodes in high dimensions. Among the most widely used are digital nets and sequences \cite{DP10}, which use number-theoretic constructions to achieve uniformity. However, in high dimensions, these often suffer from correlations in low-dimensional projections \cite{kirklem24}. A common and effective workaround is to permute coordinates, randomly or deterministically \cite{Owe95, kirklem24, Lem04a, LEcLem02a}, which disrupt such correlations. This general method is limited and by no means optimal, particularly if something is known about the integrand. Another widely used class is lattice rules, fully determined by a generating vector \( \boldsymbol{h} \in \{0,1,\ldots,n-1\}^d \). The leading construction algorithm remains for many years the component-by-component (CBC) method \cite{Kro59, SloRez01} and can be implemented efficiently via fast Fourier transforms (FFTs) \cite{NuyCoo06b, NuyCoo06a, LatNet}. CBC readily incorporates coordinate weights guiding relative importance of coordinate subsets. Similar weighted constructions are also available for generating matrices in digital nets.

\subsection{Our Contributions}

Following the work of Bansal and Jiang \cite{BJ25}, we study new constructions of high dimensional QMC sets via combinatorial discrepancy theory. 
In particular, we adapt their constructions for integrands belonging to weighted function spaces and for functions with low effective dimension. 

We prove the following error bound for weighted function spaces, where the QMC point set is denoted as $A_T$ (see \Cref{sec:algorithm} for the construction). Here, for explicitness, we assume that the integrand $f$ has a Fourier decomposition $f(\bx) = \sum_{\bk \in \mathbb{Z}^d} \widehat{f}(\bk) \exp(2 \pi i \langle \bk, \bx \rangle)$.\footnote{As noted in \cite{BJ25}, it is possible to extend the notion of smoothed-out variation and the error bound to more general functions, but it requires taking certain limits and does not give clean formulas.}

\begin{figure}[t]
    \centering
    \includegraphics[width=0.7\linewidth]{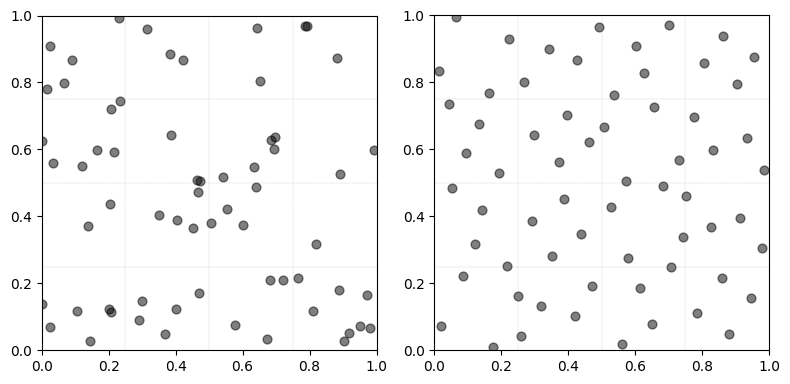}
    \caption{(\textbf{Left}) Uniform IID points  (\textbf{Right}) Points $A_T$ resulting from  algorithm $\SubgTrans$, $n=64$.}
    \label{fig:points}
\end{figure}

\begin{restatable}[Weighted subgaussian transference]{theorem}{WSubgTrans}\label{thm:main1}
    For every function $f:[0,1]^d \to \R$ that has a Fourier decomposition, the integration error satisfies 
    \[
    \E[(\err(A_T,f))^2]  \leq  O\left(\frac{d\log (dn)\prod_{j\in [d]}(1+\gamma_j^2\log n)}{n^2} \cdot \sigma_{\WSO}(f)^2 \right),
    \]
    where the {\em weighted smoothed-out variation}
    \begin{align}\label{eq:wso_f}
   \sigma_{\WSO}(f)^2:= \sum_{\bk\neq \mathbf{0}} \big|\widehat{f}(\bk) \big|^2 \left(\sum_{\emptyset \neq u \subseteq [d]}   \prod_{j \in u} \frac{|k_j|}{\gamma_j^2} \right).
    \end{align}
\end{restatable}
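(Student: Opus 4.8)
The plan is to follow the architecture of the unweighted argument of \cite{BJ25}, carrying the coordinate weights $\gamma_j$ through every step. First I would expand the error in the Fourier basis: since $\int_{[0,1]^d}f=\widehat f(\mathbf 0)$,
\[
\err(A_T,f)=\sum_{\bk\neq\mathbf 0}\widehat f(\bk)\,\Big(\frac1n\sum_{i=1}^n e^{2\pi i\langle\bk,\bx_i\rangle}\Big).
\]
Recall from \Cref{sec:algorithm} that $A_T$ is produced from an initial point set $P_0$ with $N=\poly(d,n)$ points by $T=O(\log(dn))$ rounds of halving: round $t$ draws a random $\pm1$ coloring $\chi_t$ of the current set $P_t$ (with small \emph{weighted} combinatorial discrepancy against anchored boxes) and keeps one color class, so $|P_{t+1}|=|P_t|/2$. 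With $\mu_P(f):=|P|^{-1}\sum_{\bx\in P}f(\bx)$, the halving identity $\mu_{P_{t+1}}(f)-\mu_{P_t}(f)=|P_t|^{-1}\sum_{\bx\in P_t}\chi_t(\bx)f(\bx)=:\Delta_t(f)$ telescopes the error into $\err(A_T,f)=\big(\mu_{P_0}(f)-\widehat f(\mathbf 0)\big)+\sum_{t=0}^{T-1}\Delta_t(f)$. Since the colorings produced by the algorithm are marginally mean-zero and distinct rounds use independent randomness, $\E[\Delta_t(f)]=0$ and the $\Delta_t(f)$ form a martingale difference sequence, so every cross term vanishes:
\[
\E\big[\err(A_T,f)^2\big]=\E\big[(\mu_{P_0}(f)-\widehat f(\mathbf 0))^2\big]+\sum_{t=0}^{T-1}\E\big[\Delta_t(f)^2\big].
\]
The first term is negligible for the stated choice of $P_0$ (controlled exactly as in \cite{BJ25}), so the task reduces to bounding $\E[\Delta_t(f)^2]$ for each round.

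For a single round the key is to retain the $L^2$ structure of $\Delta_t(f)$ rather than passing to the worst-case discrepancy of $\chi_t$ — worst-casing would only reproduce the classical Koksma--Hlawka estimate with $V_{\HK}(f)$. Using the Zaremba / Hardy--Krause identity applied to the signed measure $|P_t|^{-1}\sum_{\bx\in P_t}\chi_t(\bx)\delta_{\bx}$ (the empty-set term, equal to the imbalance of $\chi_t$, vanishes or is handled separately),
\[
\Delta_t(f)=\sum_{\emptyset\neq u\subseteq[d]}(-1)^{|u|}\int_{[0,1]^{u}}\disc_u(\chi_t;\mathbf t_u)\,\partial_u f(\mathbf t_u,\mathbf 1)\,d\mathbf t_u,\qquad \disc_u(\chi_t;\mathbf t_u):=\tfrac1{|P_t|}\sum_{\bx\in P_t}\chi_t(\bx)\,\mathbf 1\{\bx_u\in[\mathbf 0,\mathbf t_u)\},
\]
where $\partial_u f$ is the mixed partial $\partial^{|u|}f/\prod_{j\in u}\partial x_j$ (interpreted via the Fourier series) and $\bx_u$ denotes the restriction of $\bx$ to the coordinates in $u$. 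Squaring and taking expectations expresses $\E[\Delta_t(f)^2]$ as a sum over $u,u'$ of double integrals of the box-discrepancy covariance kernel $\E[\disc_u(\chi_t;\mathbf t_u)\,\disc_{u'}(\chi_t;\mathbf t'_{u'})]$ paired against $\partial_u f\cdot\partial_{u'}f$. Controlling this kernel is exactly what the \emph{weighted subgaussian transference} provides: the weighted analogue of Lemma~\ref{lem:transference} produces $\chi_t$ whose anchored-box discrepancy process is subgaussian, with the imbalance budget in coordinate $j$ scaled by $\gamma_j$, giving on coordinates $u$ a bound of the shape
\[
\E\big[\disc_u(\chi_t;\mathbf t_u)\,\disc_u(\chi_t;\mathbf t'_u)\big]\;\lesssim\;\frac{d\log(dn)\,\prod_{j\in[d]}(1+\gamma_j^2\log n)}{\big(\prod_{j\in u}\gamma_j^2\big)\,|P_t|^2}\,\widetilde K_u(\mathbf t_u,\mathbf t'_u),
\]
where $\widetilde K_u$ is a log-singular kernel whose Fourier coefficients decay like $\prod_{j\in u}|k_j|^{-1}$, together with comparable control of the off-diagonal ($u\neq u'$) blocks — equivalently, of the non-orthogonality of the trigonometric sums across frequencies.

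Plugging this into the double integrals, the diagonal ($u=u'$) contributions become, by Parseval, $\sum_{\emptyset\neq u}\frac{d\log(dn)\prod_j(1+\gamma_j^2\log n)}{(\prod_{j\in u}\gamma_j^2)|P_t|^2}\sum_{\bk}|\widehat f(\bk)|^2\prod_{j\in u}|k_j|$: pairing $\partial_u f$, which carries Fourier weight $\prod_{j\in u}(2\pi k_j)^2$, against the $\prod_{j\in u}|k_j|^{-1}$ decay of $\widetilde K_u$ leaves precisely the half-order weights $\prod_{j\in u}|k_j|$ of \eqref{eq:wso_f}, and the off-diagonal contributions are dominated by the diagonal ones via the subgaussian control. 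Reassembling the sum, the $f$-dependent part is exactly $\sum_{\bk\neq\mathbf 0}|\widehat f(\bk)|^2\sum_{\emptyset\neq u\subseteq[d]}\prod_{j\in u}|k_j|/\gamma_j^2=\sigma_{\WSO}(f)^2$, so $\E[\Delta_t(f)^2]\lesssim |P_t|^{-2}\,d\log(dn)\,\prod_{j}(1+\gamma_j^2\log n)\,\sigma_{\WSO}(f)^2$. Summing over rounds and using $\sum_{t=0}^{T-1}|P_t|^{-2}=\sum_{t}(2^t/N)^2=O(1/n^2)$ (a geometric series dominated by the final round, where $|P_T|=n$) yields the theorem.

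I expect the main obstacle to be the weighted subgaussian transference step and the accounting around it: one must set up the weighted anchored-box family so that the per-coordinate imbalance budget is of the right order (yielding the $1+\gamma_j^2\log n$ factors), establish the covariance bound above for the \emph{entire} kernel — the off-diagonal $u\neq u'$ blocks, equivalently the cross-frequency covariances, must not dominate — with the correct product dependence on the $\gamma_j$, and check that after pairing against $\partial_u f$ the sums over $\bk$ converge and collapse to $\sigma_{\WSO}(f)^2$ with no divergent tail. It is precisely the subgaussian (rather than merely worst-case) control of $\chi_t$ that makes $\sigma_{\WSO}(f)$, rather than $V_{\HK}(f)$, the natural measure of variation here; the handling of the initial set $P_0$ and of the $\bk=\mathbf 0$ (imbalance) term is routine given \cite{BJ25}.
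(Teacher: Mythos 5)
Your outline does track the paper's architecture: your telescoping identity for $\Delta_t(f)$ is exactly the transference principle (\Cref{lem:transference}) applied through balanced colorings, the Hlawka--Zaremba expansion against $\partial_u f$ is the same starting point, the initial-pool term is handled the same way, and the geometric sum $\sum_t |P_t|^{-2}=O(1/n^2)$ matches. However, the proposal leaves the actual core of the theorem asserted rather than proved, and one of the assertions is not merely unproven but unjustifiable as stated. First, you never invoke the random dyadic shift $\bs$ that the algorithm applies (line 2 of the $\wSubgTrans$ algorithm), and without it your claim that ``the off-diagonal contributions are dominated by the diagonal ones via the subgaussian control'' does not hold: for a fixed point configuration and fixed shift, expanding $\by^{f,u}$ in the Fourier basis produces cross-frequency terms $\widehat f(\bk)^*\widehat f(\bk')$ that do not vanish and, if bounded crudely (e.g.\ by Cauchy--Schwarz), give $(\sum_{\bk}|\widehat f(\bk)|\prod_{j\in u}|k_j|^{1/2})^2$ rather than the $\ell_2$-type quantity $\sigma_{\WSO}(f)^2$. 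In the paper these cross terms are killed \emph{only in expectation over} $\bs$, via Fourier orthogonality (\Cref{cor:orthogonality_fourier_high_dim}); the cross-$u$ blocks, by contrast, are not ``dominated'' at all but handled exactly, because the blocks $\bfwd^{t,\D,u}$ occupy disjoint coordinates of a single jointly subgaussian vector, so the whole linear functional in \eqref{eq:disc_err_high_dim} is subgaussian with parameter equal to the sum of the squared block norms.

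Second, the displayed covariance bound with prefactor $d\log(dn)\prod_j(1+\gamma_j^2\log n)/(\gamma(u)^2|P_t|^2)$ and a kernel with Fourier decay $\prod_{j\in u}|k_j|^{-1}$ is precisely the content that needs proof, and you flag it as ``the main obstacle'' without supplying it. In the paper this comes from three concrete ingredients: (i) running $\SubgVecBal$ on the \emph{weighted dyadic} incidence vectors and computing $\|\bv_j\|_2^2=1+\prod_{j\in[d]}(1+h\gamma_j^2)$, which together with the Self-Balancing Walk guarantee (\Cref{thm:self_bal_walk}) yields that $(\bc_t,\bfwd^{t,\D})$ is $O(d\log(dn)\prod_j(1+\gamma_j^2\log n))$-subgaussian (\Cref{lem:subg_dyadic}); (ii) passing from anchored corners to dyadic boxes via the structured decomposition matrix $(\overline P_h)^{\otimes u}$, since the algorithm's discrepancy guarantee is over dyadic boxes, not anchored boxes as your sketch assumes; and (iii) the one-dimensional estimate $\|\overline P_h^\top\by^{e_{k_j}}\|_2^2\le |k_j|$ from \cite{BJ25}, applied to the finite-difference vectors $\by^{f_\bs,u}$ (note that working instead with a kernel paired against $\partial_u f$, as you propose, would also introduce extraneous $(2\pi)^{2|u|}$ factors unless handled as the paper does). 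Without (i)--(iii) and the random-shift orthogonality, the collapse of the $f$-dependent part to $\sigma_{\WSO}(f)^2$ with the stated prefactor is not established, so the proposal is a correct-shaped plan with a genuine gap at its center.
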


To the best of our knowledge, this is the first explicitly stated error bound for high dimensional QMC constructions. 
One can, in principle, obtain an error bound by applying (a general version of) Koksma-Hlawka inequality to weighted function spaces, but the error bound in \Cref{thm:main1} is favorable in comparison, as the weighted smoothed-out variation is substantially smaller than the Hardy-Krause variation of the weighted integrand \cite{BJ25}. 

We can obtain similar error bounds for functions with low effective dimensions, but we defer the formal statements to \Cref{sec:effective_dimension}. 
We also remark that as in \cite{BJ25}, the error rate of these QMC constructions is at most that of the MC error (up to a logarithmic factor), thereby achieving the best of both the error in \Cref{thm:main1} and the MC error.

\noindent\textbf{Numerical Experiments.} We also implement the subgaussian transference algorithm, providing the first numerical evaluations of the QMC constructions in \cite{BJ25} and the weighted extensions introduced here. Our experiments evaluate the irregularity of distribution of the resulting point sets and integration tests in both truncation and superposition dimension settings. The results investigate the robustness of subgaussian transference across different initializations, its consistent improvement over IID Monte Carlo, and the benefits of tailoring the algorithm to truncation or superposition effective dimension structure.

\medskip
\noindent \textbf{Roadmap.} The structure of this paper is as follows. Section~\ref{sec:algorithm} introduces the weighted subgaussian transference algorithm, $\wSubgTrans$, and establishes its theoretical properties. Section~\ref{sec:effective_dimension} develops variants of the subgaussian transference algorithm tailored to functions with low effective dimension and analyzes their theoretical guarantees. The proofs of the main results for the weighted setting (Section~\ref{sec:algorithm}) and the effective-dimension variants (Section~\ref{sec:effective_dimension}) are provided in Appendix~\ref{app:proof_1} and Appendix~\ref{app:proof_2}, respectively. Section~\ref{sec:numerics} reports numerical experiments considering both irregularity of distribution and high-dimensional integration approximation examples. Finally, Section~\ref{sec:discussion} discusses the implications of our findings and outlines directions for future work.

\section{The QMC Construction for Weighted Spaces}\label{sec:algorithm}

In this section, we present a variant of the subgaussian transference algorithm from \cite{BJ25}, adapted to weighted function spaces, and analyze its theoretical properties. We refer to this algorithm as $\wSubgTrans$ throughout. Subsection~\ref{subsec:notation} introduces the necessary notation, and Subsection~\ref{subsec:algorithm} describes the algorithm in detail.

\subsection{Notation}\label{subsec:notation}

For integer $d>0$, we write $[d] :=\{1, \ldots, d\}$. 
We use bold lowercase letters, e.g., $\bz=(z_1,\ldots,z_d)$, to denote vectors, with $z_j$ denoting the $j^{\mathrm{th}}$ coordinate of $\bz$. For a collection of vectors, we denote the $i^{\mathrm{th}}$ vector by $\bz_i$, and refer to its $k^{\mathrm{th}}$ coordinate as $z_{i,k}$. Given a function $f: [0,1]^d \rightarrow \R$ and a finite set of points $A\subseteq [0,1]^d$, we define the (signed) integration error with respect to point set $A$.
\[
\err(A,f):=\frac{1}{|A|}\sum_{\bx \in A} f(\bx)- \int_{[0,1]^d} f(\bx) \, d\bx.
\]

\smallskip
\noindent \textbf{Dyadic Intervals and Boxes.} 
Let $j \in \mathbb{N}$, we denote by $\D_j$ the set of all left-open dyadic intervals in $(0,1]$ of length $2^{-j}$, i.e., $\D_j := \{(\ell/2^j, (\ell+1)/2^j]: \ell = 0,\ldots, 2^j-1 \}$, and call $j$ the level of these dyadic intervals. For $h \in \mathbb{N}$, define $\D_{\leq h} := \bigcup_{j \leq h} \D_j$ as the set of all dyadic intervals of level less than or equal to $h$. In higher dimensions, $\D_{\leq h}^{\otimes d} := \{I_1 \times \cdots \times I_d: I_j \in \D_{\leq h} \text{ for all $j \in [d]$}\}$ denotes all $d-$dimensional dyadic boxes with level at most $h$ in each dimension.

\smallskip
\noindent \textbf{Product Weights.}  Let $\gamma_1\ge \gamma_2\ge \cdots \ge \gamma_d>0$ be a sequence of positive weights assigned to each coordinate direction as in \Cref{sec:qmc_high_dim}. For any dyadic box $B=B_1\times \cdots \times B_d\in \D_{\leq h}^{\otimes d}$, we say $B$ is \emph{trivial} in dimension $j$ if $B_j=(0,1]$.  The \emph{weight} of $B$, denoted as $\gamma(B)$, is then defined as the product of the weights corresponding to its nontrivial coordinate directions:
\begin{align}\label{eq:box_weight} 
\gamma(B):=\prod_{j\in [d]: B_j\ne (0,1]} \gamma_j,
\end{align}
with the convention that $\prod_{j\in \emptyset}\gamma_j=1$. For nonempty subset $u \subseteq [d]$, we slightly abuse notation and write $\gamma(u):=\prod_{j\in u}\gamma_j$. Lastly, throughout, we assume without loss of generality that $n$ is an integer power of $2$.

\subsection{$\wSubgTrans$ Algorithm}\label{subsec:algorithm}

The $\wSubgTrans$ algorithm aims to produce $n$ point sets, each of size $n$, with low continuous discrepancy, while accounting for product weights across coordinate directions. The algorithm begins with $n^2$ independent random samples and iteratively refines the point sets by applying balanced colorings that minimize the {\em combinatorial} discrepancy in an associated vector balancing problem. This combinatorial discrepancy minimization is performed via the $\SubgVecBal$ algorithm \cite{BJ25, ALS21} over $\log_2 n$ steps. At each step, the current sets are partitioned into halves, ultimately yielding $n$ subsets, each of cardinality $n$.

The vector balancing problem underlying this procedure is as follows: given vectors $\bv_1, \ldots, \bv_n \in \R^m$ of $\ell_2$-norm at most $1$, and the goal is to find a coloring $\bc=(c_1,\ldots,c_n) \in \{\pm 1\}^n$ to minimize the combinatorial discrepancy $\|\sum_{j=1}^n c_j \bv_j\|_\infty$. To address this, we first recall the \textsf{Self-Balancing Walk} algorithm from \cite{ALS21}, which solves the problem in an online manner by sequentially assigning signs to the input vectors while ensuring low discrepancy with probability at least $1 - \delta$.

\begin{algorithm}[H]
\caption{\textsf{Self-Balancing Walk} \cite{ALS21}} 
\label{alg:balance}
\begin{algorithmic}[1]
\State $\bw_0 \gets 0$
\State $\lambda \gets 30 \log(mn/\delta)$
\For{$1 \leq j \leq n$}
    \If{$|\langle \bw_{j-1}, \bv_j\rangle| > \lambda$ \textbf{ or } $\|w_{j-1}\|_\infty > \lambda$}
        \State \textbf{Fail} \Comment{Algorithm terminates with failure}
    \EndIf
    \State $p_j \gets \frac12 - \frac{\langle \bw_{j-1}, \bv_j\rangle}{2\lambda}$
    \State $c_j \gets 1$ with probability $p_j$, and $c_j \gets -1$ with probability $1-p_j$
    \State $\bw_j \gets \bw_{j-1} + c_j \bv_j$
\EndFor
\State \Return coloring $\bc=(c_1,\ldots,c_n)\in \{\pm 1\}^n$ 
\end{algorithmic}
\end{algorithm}

The idea behind \textsf{Self-Balancing Walk} is as follows. At step $j$, the algorithm maintains the partial sum $\bw_{j-1}$ and decides the sign of $\bv_j$ probabilistically depending on $\langle \bw_{j-1},\bv_{j}\rangle$. If $\bw_{j-1}$ and
$\bv_j$ are orthogonal, it chooses $c_j=\pm 1$ with equal probability. As the alignment $\langle \bw_{j-1},\bv_{j}\rangle$ increases, the probability of selecting $c_j=-1$ increases, counteracting drift in that direction. The parameter $\lambda=30\log(mn/\delta)$ ensures that, with probability at least $1-\delta$,
$|\langle \bw_{j-1}, \bv_j\rangle|,\, \|\bw_{j-1}\|_\infty \le \lambda$, so the iterations remain feasible with high probability. A formal performance guarantee is given in Theorem~\ref{thm:self_bal_walk} (Appendix~\ref{app:proof_1}).

To ensure that the coloring $\bc=(c_1,\ldots,c_n)$ is {\em balanced}, i.e., has equal number of $1$'s and $-1$'s,  \cite{BJ25} proposed a modified algorithm $\SubgVecBal$. For even $n$, this is achieved by pairing adjacent vectors to form $\bv_1 - \bv_2, \bv_3 - \bv_4, \cdots, \bv_{n-1} - \bv_n$, and then applying \textsf{Self-Balancing Walk} to the resulting set. We will assume the balanced property henceforth.

Our $\wSubgTrans$ algorithm, which constructs product-weighted low-discrepancy point sets, is formally defined as follows:
\begin{algorithm}[H]
\caption{\(\wSubgTrans\)}
\label{alg:wSubgTrans}
\begin{algorithmic}[1]
\State $h \gets O(\log (dn))$
\State Sample a random shift $\boldsymbol{s} \sim [0,1)^d$ and shift the dyadic system $\mathcal{D}_{\leq h}^{\otimes d}$ accordingly (fold coordinates exceeding $1$ back into $[0,1)$). \Comment{For exposition, take $\boldsymbol{s} = \boldsymbol{0}$}
\State $n_0 \gets n^2$, \quad $T \gets \log_2 n$
\State $A_0^{(0)} \gets$ $n_0$ IID\ uniformly distributed samples from $[0,1]^d$
\For{$t = 0$ \textbf{to} $T-1$}
    \For{$i = 0$ \textbf{to} $2^t-1$}
        \State $A_t \gets A_t^{(i)}$
        \For{each $\bz_j \in A_t$}
            \State $\bv_j \gets \Big( \be^{A_t}_j , \big(\gamma(B)\ind_{\{\bz_j \in B\}}\big)_{B \in \D_{\leq h}^{\otimes d}} \Big)$ \Comment{$\be^{A_t}_j \in \{0,1\}^{|A_t|}$, with a single one in the $j$th coordinate}
        \EndFor
        \State $\bc_t \gets \SubgVecBal (\{\bv_j\})$ 
        \Comment{ $\bc_t \in \{\pm 1\}^{|A_t|}$: balanced coloring}
        \State $A_{t+1}^{(2i)} \gets \{\bz_j \in A_t : c_{t,j} = -1\}$
        \State $A_{t+1}^{(2i+1)} \gets \{\bz_j \in A_t : c_{t,j} = +1\}$
    \EndFor
\EndFor
\State \Return $\{A_T^{(i)} : 0 \le i < 2^{T}\}$ \Comment{$2^{T}(=n)$ final sets, each of size $n$}
\end{algorithmic}
\end{algorithm}

Our theoretical result provides the following guarantee for the point sets $A_T$ generated by the $\wSubgTrans$ algorithm; the proof is given in Appendix \ref{app:proof_1}.

\WSubgTrans*

\begin{remark}
The $\wSubgTrans$ algorithm depends on the weights $\gamma_1\ge \gamma_2\ge \cdots\ge \gamma_d>0$ assigned to coordinate directions. In the unweighted case ($\gamma_j\equiv 1$), it reduces to the subgaussian transference algorithm of \cite{BJ25}, hereafter denoted $\SubgTrans$. Under this specialization, Theorem \ref{thm:main1} recovers \cite[Theorem 1.1]{BJ25}, yielding the bound $\frac{(\log n)^{O(d)}}{n^2}\sigma_{\SO}(f)^2$, where $\sigma_{\SO}(f)^2$ (``smoothed-out variation'') coincides with $\sigma_{\WSO}(f)^2$ when $\gamma_j\equiv 1$. This bound becomes vacuous in high dimensions due to the $(\log n)^{O(d)}$ factor. By contrast, for product weights $\gamma_j=j^{-\alpha}$ with some $\alpha>1/2$,
    \begin{align*}
    \prod_{j\in [d]}(1+\gamma_j^2\log n)=\prod_{j\in [d]}\bigg(1+\frac{\log n}{j^{2\alpha}}\bigg)\le \exp\big(C_{\alpha}(\log n)^{\frac{1}{2\alpha}}\big),
    \end{align*}
    where $C_{\alpha}=2\alpha+1+\frac{1}{2\alpha-1}$. In particular, the right-hand side is dimension-independent and subpolynomial in $n$: for any $\varepsilon>0$, $ \exp\big(C_{\alpha}(\log n)^{\frac{1}{2\alpha}}\big)=o(n^{\varepsilon})$ as $n\to \infty$.
\end{remark}

\section{The Constructions for Functions with Low Effective Dimension}\label{sec:effective_dimension}

As discussed in the introduction, many high-dimensional functions encountered in practice can be well-approximated by depending strongly on only a few coordinates, or on low-order interactions between them. This motivates the notion of effective dimension, which captures the idea that the function’s relevant dimensionality is much smaller than its nominal dimension $d$.

In this section, we introduce two variants of the $\wSubgTrans$ algorithm that explicitly incorporate effective dimension. These variants correspond to two standard measures: the \emph{superposition sense}, which quantifies the importance of variable interaction order, and the \emph{truncation sense}, which quantifies the importance of the earliest coordinates. We begin with a brief review of the analysis of variance (ANOVA) decomposition and the associated definitions of effective dimension \cite[Appendix A]{Owe13}. Subsection \ref{subsec:superposition} addresses the superposition dimension, while Subsection \ref{subsec:truncation} focuses on the truncation dimension.

For a subset of coordinates $u \subseteq [d]$, we denote its cardinality by $|u|$, and write $\overline{u}:= [d] \setminus u$ for its complement. Given a vector $\bx \in \R^d$ and a subset $u \subseteq [d]$, we denote by $\bx^{u} \in \R^{|u|}$ the restriction of $\bx$ to the coordinates indexed by $u$. In the analysis of variance (ANOVA) decomposition, a function $f$ can be expressed as
\begin{align*}
f(\bx)=\sum_{u\subseteq [d]}  f_u (\bx),
\end{align*}
where each component $f_u(\cdot)$ depends only on the variables $\bx^{u}$. More precisely, for any subset $u\subseteq [d]$, the corresponding term is defined by
\[
f_u(\bx):=\int_{[0,1]^{|\overline{u}|}} \Big(f(\bx)-\sum_{v\subsetneq u} f_{v}(\bx)\Big) d\bx^{\overline{u}}
\]
The zero-order term $f_{\emptyset}(x)$ is a constant that is equal to the mean of $f$. The ANOVA decomposition is orthogonal in $L^2$, so that 
\[
\sigma(f)^2=\sum_{u\subseteq [d] } \sigma_u^2,\quad  \text{where} \ \ \sigma_u^2:= \sigma(f_u)^2.
\]

We now recall the definitions of two notions of effective dimension in the superposition sense and in the truncation sense following \cite{CafMorOwe97}.

\begin{definition}[Effective dimension, superposition sense]\label{def:superposition}
    The effective dimension of $f$, in the superposition sense, is the smallest integer $d_S$ such that $\sum_{0<|u|\le d_{S}} \sigma (f_u)^2\ge 0.99 \sigma(f)^2$.
\end{definition}

\begin{definition}[Effective dimension, truncation sense]\label{def:truncation}
    The effective dimension of $f$, in the truncation sense, is the smallest integer $d_T$ such that $\sum_{u\subseteq \{1,2, \ldots, d_T\} } \sigma (f_u)^2\ge 0.99 \sigma(f)^2$.
\end{definition}

\subsection{Effective Superposition Dimension}\label{subsec:superposition}

For $h \in \mathbb{N}$ and $s\in [d]$, let $\widetilde{\D}_{\leq h}^{\otimes s}$ be the collection of dyadic boxes of level at most $h$ that involve at most $s$ non-trivial dimensions; this is a subset of the full dyadic collection $\D_{\leq h}^{\otimes d}$ defined in Subsection \ref{subsec:notation}. For functions with small effective dimension in the superposition sense (Definition \ref{def:superposition}), we modify the $\wSubgTrans$ algorithm by forming the incidence vector of $\bz_j$ with respect to $\widetilde{\D}_{\leq h}^{\otimes s}$ rather than the full collection $\D_{\leq h}^{\otimes d}$. Specifically, on line $9$ of Algorithm \ref{alg:wSubgTrans}, we define the vector $\bv_j$ as
\begin{align} \label{eq:stacked_vector}
\bv_j = \Big( \be^{A_t}_j , \big(\ind_{\{\bz_j \in B\}}\big)_{B \in \widetilde{\D}_{\leq h}^{\otimes s}} \Big) ,
\end{align}
where $\ind_{\{\bz_j \in B\}}=1$ if $\bz_j \in B$ and $0$ otherwise. All other steps of the $\wSubgTrans$ algorithm remain unchanged. The modified algorithm outputs $n$ sets of points, each of equal size $n$; we use $A_{T}$ to denote any one of these sets.

Our next result provides a performance guarantee for the point set $A_T$ generated by the modified $\wSubgTrans$ algorithm. The proof is given in Appendix \ref{app:proof_2}.

\begin{theorem}\label{thm:superposition}
      For every function $f:[0,1]^d \to \R$ that has a Fourier decomposition, the integration error satisfies\footnote{$O_s(1)$ denotes a constant that depends only on $s$.}
    \[
    \E[(\err(A_T,f))^2] \leq O_s(1) \cdot \inf_{f = g + h,\, h\in \F_{s}} \bigg( \frac{\log n}{n}\sigma(g)^2 + \frac{d^s(\log (dn))^{s+1}}{n^2}\sigma_{\SO}(h)^2 \bigg),
    \]
    where the infimum is over all decompositions of $f$ as the sum of two functions $g, h :[0,1]^d \to \R$ with $g\in L^2([0,1]^d)$ and $h\in \F_{s}$. The class $\F_{s}$ consists of functions involving interactions of order at most $s$, namely 
    \[
    \F_{s}=\bigg\{\sum_{|u|\le s} f_{u}(\bx^u): f_u\in L^2([0,1]^d)\bigg\}.
    \]
    The {\em smoothed-out variation} is defined as
    \begin{align*}\label{eq:so_f}
   \sigma_{\SO}(h)^2:= \sum_{\bk\neq \mathbf{0}} \big|\widehat{h}(\bk) \big|^2 \bigg(\sum_{\emptyset \neq u \subseteq [d]}   \prod_{j \in u} |k_j|\bigg).
    \end{align*}
\end{theorem}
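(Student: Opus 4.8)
The plan is to follow the proof of Theorem~\ref{thm:main1} (equivalently, the analysis of \cite{BJ25}), with two changes: the family of test boxes is $\widetilde{\D}_{\leq h}^{\otimes s}$ instead of $\D_{\leq h}^{\otimes d}$, which turns the dimension-exponential box counts into $O_s(d^s)$ ones, and an extra Monte-Carlo estimate absorbs the part of $f$ lying outside $\F_s$. The first move is to diagonalize the second moment over Fourier modes. Writing $f=\sum_{\bk}\widehat f(\bk)e_\bk$ with $e_\bk(\bx)=\exp(2\pi i\langle\bk,\bx\rangle)$, linearity gives $\err(A_T,f)=\sum_{\bk\ne\mathbf{0}}\widehat f(\bk)\,\err(A_T,e_\bk)$. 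Since the random shift $\bs$ is drawn independently of the IID samples and of the coins of $\SubgVecBal$, the point configuration seen relative to the shifted dyadic system is again IID uniform and independent of $\bs$; hence the set of indices selected by the algorithm is independent of $\bs$, and $\err(A_T,e_\bk)=e^{2\pi i\langle\bk,\bs\rangle}X_\bk$, where $X_\bk$ (the integration error of the \emph{unshifted} effective point set against $e_\bk$) is independent of $\bs$. Taking $\E_\bs$ first annihilates every off-diagonal cross term, leaving
\[
\E\!\big[(\err(A_T,f))^2\big]=\sum_{\bk\ne\mathbf{0}}|\widehat f(\bk)|^2\,\E\big[|X_\bk|^2\big],
\]
so it remains to bound $\E[|X_\bk|^2]$ for each frequency $\bk$.

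I would prove two bounds on $\E[|X_\bk|^2]$. (i) A \emph{Monte-Carlo bound, for every $\bk$}: following $X_\bk$ down the binary-splitting tree, each of the $T=\log_2 n$ halvings changes it by $\pm\tfrac1{|A_t|}\sum_j c_{t,j}e_\bk(\bz_j)$ with a fair random sign; the second moment of each increment is controlled via the guarantee of \textsf{Self-Balancing Walk} (Theorem~\ref{thm:self_bal_walk}), and summing the resulting geometric series (the initial IID configuration contributing $O(1/n^2)$) gives $\E[|X_\bk|^2]=O(\log n/n)$. (ii) A \emph{quasi-Monte-Carlo bound, for $|\mathrm{supp}(\bk)|\le s$}: expand $e_\bk$ in the level-$\le h$ Haar system in the coordinates $\mathrm{supp}(\bk)$ (the level-$>h$ tail is $O(2^{-h})$ and negligible once $h=\Theta(\log(dn))$ is taken large enough; the target bound is vacuous anyway unless $\prod_{j\in\mathrm{supp}(\bk)}(1+|k_j|)<2^h$). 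This expresses $X_\bk$ as a linear combination of the local discrepancies $|A_T\cap B|/n-|B|$ over boxes $B\in\widetilde{\D}_{\leq h}^{\otimes s}$. Unrolling the $\SubgVecBal$ recursion---each such local discrepancy contracts by $\tfrac12$ per halving and picks up that step's combinatorial discrepancy, which is $O_s(d^{s/2}\,\mathrm{polylog}(dn))$ simultaneously for all boxes since the vectors $\bv_j$ here have $\ell_2$-norm $O_s(d^{s/2}\,\mathrm{polylog}(dn))$---shows each box has continuous local discrepancy $O_s(d^{s/2}\,\mathrm{polylog}(dn)/n)$; this is the passage from an $n^{-1/2}$ to an $n^{-1}$ rate. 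Taking the second moment and using the randomness of $\SubgVecBal$ together with the dyadic/Haar orthogonality to replace the crude $\ell_1$-over-boxes estimate by a weighted $\ell_2$ one, the quadratic form in the Haar coefficients of $e_\bk$ evaluates exactly to $\prod_{j\in\mathrm{supp}(\bk)}(1+|k_j|)-1$; collecting the factors from the box count ($|\widetilde{\D}_{\leq h}^{\otimes s}|=\mathrm{poly}_s(dn)$ and maximal per-point incidence $O_s(d^s(\log dn)^s)$) and from $\lambda$ yields
\[
\E\big[|X_\bk|^2\big]\ \le\ O_s\!\Big(\tfrac{d^s(\log(dn))^{s+1}}{n^2}\Big)\Big(\prod_{j\in\mathrm{supp}(\bk)}(1+|k_j|)-1\Big)\qquad\text{whenever }|\mathrm{supp}(\bk)|\le s.
\]

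To conclude, fix any decomposition $f=g+h$ with $h\in\F_s$; then $\widehat h$ is supported on $\{\bk:|\mathrm{supp}(\bk)|\le s\}$, and $|\widehat f(\bk)|^2\le 2|\widehat g(\bk)|^2+2|\widehat h(\bk)|^2$. Substituting into the diagonalized identity, bounding the $g$-modes by (i) and the $h$-modes by (ii), and invoking Parseval together with $\prod_j(1+|k_j|)-1=\sum_{\emptyset\ne u}\prod_{j\in u}|k_j|$ (so that $\sum_{\bk\ne\mathbf{0}}|\widehat g(\bk)|^2=\sigma(g)^2$ and $\sum_{\bk\ne\mathbf{0}}|\widehat h(\bk)|^2\big(\prod_{j\in\mathrm{supp}(\bk)}(1+|k_j|)-1\big)=\sigma_{\SO}(h)^2$), one obtains the stated inequality for this particular $(g,h)$; taking the infimum over all such decompositions completes the proof.

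The main obstacle is the second half of (ii): converting the per-box combinatorial-discrepancy control of $\SubgVecBal$ over $\widetilde{\D}_{\leq h}^{\otimes s}$ into a \emph{second}-moment bound on $X_\bk$ that carries precisely the Fourier weight $\prod_{j\in\mathrm{supp}(\bk)}(1+|k_j|)$ rather than its square (which a naive $\ell_1$ estimate would produce), and doing the bookkeeping so that exactly the factor $d^s(\log(dn))^{s+1}$ (and no worse) survives---this is where restricting to boxes with at most $s$ nontrivial coordinates is essential, and it is the technical core imported from \cite{BJ25}. By contrast, the Fourier diagonalization and the Monte-Carlo bound (i) are comparatively routine.
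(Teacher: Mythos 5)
Your proposal is correct and takes essentially the same route as the paper: it rests on the identical core ingredients---the restricted dyadic family $\widetilde{\D}_{\leq h}^{\otimes s}$ giving $\|\bv_j\|_2^2 = O_s((d\log(dn))^s)$ and hence $O_s(d^s(\log(dn))^{s+1})$-subgaussianity of the discrepancy vector, the transference principle, the Fourier-orthogonality and $\ell_2$ estimates imported from \cite{BJ25}, and a separate Monte-Carlo-rate bound for the complementary part, combined over a decomposition $f=g+h$ with $h\in\F_s$. The only differences are organizational (you diagonalize per Fourier mode via the random shift and split coefficientwise, whereas the paper splits at the function level via $\err(A_T,f)=\err(A_T,g)+\err(A_T,h)$ and proves the two bounds as function-level statements), plus harmless imprecisions such as claiming the Haar quadratic form ``evaluates exactly to'' $\prod_{j\in\mathrm{supp}(\bk)}(1+|k_j|)-1$ when \cite[Lemma 5.3]{BJ25} only gives it as an upper bound, which is all that is needed.
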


\begin{remark}
      In particular, for any function $f:[0,1]^d \to \R$ with effective dimension $s$ in the superposition sense, consider its ANOVA decomposition 
    \[
    f(\bx)=\sum_{|u|\le s}  f_u (\bx)+\sum_{|u|>s}  f_u (\bx)=:f_{\le s}+f_{> s}.
    \]
    Then the integration error satisfies
    \[
    \E[(\err(A_T,f))^2] \leq O_s\bigg( \frac{\log n}{n}\sigma(f_{>s})^2 + \frac{d^s(\log (dn))^{s+1}}{n^2}\sigma_{\SO}(f_{\le s})^2 \bigg).
    \]
    Since $\sigma(f_{\le s})^2\ge 0.99 \sigma(f)^2$ and $\sigma(f_{>s})^2\le 0.01 \sigma(f)^2$, the modified $\wSubgTrans$ algorithm is expected to perform well in high dimensions for functions with small effective superposition dimension. 
    
    Moreover, the number of nonzero entries in the vector $\bv_j$ is $O_{s}(d^s h^s)=O_{s}((d\log (dn))^s)$, leading to significantly lower computational cost compared to the full 
$\wSubgTrans$ algorithm described in Section~\ref{sec:algorithm}. Specifically, the full $\wSubgTrans$ algorithm has an overall runtime of
\[
O\left(\,\sum_{t=0}^{T-1}\frac{n^2}{2^{t}}\cdot h^{d}\right)=O\big(n^2 (\log (dn))^d\big),
\]
while the modified algorithm tailored to superposition of order $s$ achieves an overall runtime of
\[
O_s\left(\,\sum_{t=0}^{T-1}\frac{n^2}{2^{t}}\cdot (d\log (dn))^s\right)=O_s\big(n^2 (d\log (dn))^s\big).
\]
The corresponding amortized time per output point is $O\big((\log (dn))^d\big)$ for the full algorithm and $O_s\big((d\log (dn))^s\big)$ for the modified version. When the dimension $d$ is large and the effective superposition dimension $s$ remains moderate, this results in a substantial computational improvement. 
\end{remark}

\subsection{Effective Truncation Dimension}\label{subsec:truncation}

 In this subsection, we modify the $\wSubgTrans$ algorithm to accommodate functions with small effective dimension in the truncation sense (Definition~\ref{def:truncation}). This setting can be viewed as a special case of the general $\wSubgTrans$ algorithm presented in Section~\ref{sec:algorithm}, where the weights are chosen as $\gamma_1=\cdots =\gamma_s=1$ and $\gamma_{s+1}=\cdots=\gamma_{d}=0$ for a fixed integer $s\in [d]$. In this case, 
\[
\|\bv_j\|_2^2=1+\prod_{j\in [d]}(1+h\gamma_j^2)=O(h^{s})=O\big((\log (dn))^s\big).
\]
The overall runtime of the algorithm with this specific choice of weights is $O\big(n^2 (\log (dn))^s \big)$,  yielding an amortized time per output point of $O\big( (\log (dn))^{s} \big)$.

Our next result provides a performance guarantee for the point set $A_T$ generated by the modified $\wSubgTrans$ algorithm. The proof is given in Appendix \ref{app:proof_2}.

\begin{theorem}\label{thm:truncation}
   For every function $f:[0,1]^d \to \R$ that has a Fourier decomposition, the integration error satisfies
    \[
    \E[(\err(A_T,f))^2] \leq O_s(1) \cdot   \inf_{f = g + h,\, h\in \widetilde{\F}_{s} } \bigg( \frac{\log n }{n}\sigma(g)^2 + \frac{(\log (dn))^{s+1}}{n^2}\sigma_{\SO}(h)^2 \bigg),
    \]
      where the infimum is over all decompositions of $f$ as the sum of two functions $g, h :[0,1]^d \to \R$ with $g\in L^2([0,1]^d)$ arbitrary and $h\in \widetilde{\F}_{s}$. The class $\widetilde{\F}_{s}$ consists of functions that depend only on the variables $\bx^{\{1,2,\ldots,s\}}$.
\end{theorem}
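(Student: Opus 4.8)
The plan is to show that the (random) output set $A_T$ of the truncation-tailored $\wSubgTrans$ algorithm simultaneously satisfies two estimates, and then combine them through a decomposition $f = g + h$ with $h \in \widetilde{\F}_s$. The first is a Monte Carlo-type bound valid for \emph{every} $\phi \in L^2([0,1]^d)$, namely $\E[(\err(A_T,\phi))^2] \le O\big(\tfrac{\log n}{n}\big)\sigma(\phi)^2$. This is not special to the truncation setting: regardless of which dyadic box collection drives the colorings, $A_T$ is obtained from $n^2$ IID uniform points by $T = \log_2 n$ rounds of \emph{balanced} bisection, and such a point set is never worse than $n$ IID points by more than a $\log n$ factor, uniformly over $L^2$; I would simply invoke the version of this fact proved in Appendix \ref{app:proof_1} for Theorem \ref{thm:main1} (following \cite{BJ25}). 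The second is a Quasi-Monte Carlo-type bound valid whenever $\phi$ depends only on $\bx^{\{1,\dots,s\}}$, namely $\E[(\err(A_T,\phi))^2] \le O_s\big(\tfrac{(\log(dn))^{s+1}}{n^2}\big)\sigma_{\SO}(\phi)^2$. Granting both, for any $f = g+h$ with $h \in \widetilde{\F}_s$ one uses that $\err(A_T,\cdot)$ is linear in the integrand together with $(a+b)^2 \le 2a^2 + 2b^2$ to get $\E[(\err(A_T,f))^2] \le 2\E[(\err(A_T,g))^2] + 2\E[(\err(A_T,h))^2]$, bounds the two terms by the MC and QMC estimates, and takes the infimum over such decompositions; the $s$-dependent constants are absorbed into $O_s(1)$.

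For the QMC estimate, the key observation is that the modification restricts the incidence vectors to boxes in $\D_{\le h}^{\otimes s}$ --- boxes trivial in all coordinates beyond the $s$-th --- so that the whole estimate becomes essentially $s$-dimensional. For $h \in \widetilde{\F}_s$ we have $\widehat h(\bk) = 0$ unless $\mathrm{supp}(\bk) \subseteq \{1,\dots,s\}$, so $\sigma_{\SO}(h)^2$ equals the $s$-dimensional smoothed-out variation, and each incidence vector has $\|\bv_j\|_2^2 = 1 + (1+h)^s = O\big((\log(dn))^s\big)$ with $h = O(\log(dn))$. I would then follow the transference analysis behind Theorem \ref{thm:main1}: (i) use the subgaussian discrepancy guarantee of $\SubgVecBal$ (built on the self-balancing walk, Theorem \ref{thm:self_bal_walk}) to bound, at each of the $T$ bisection rounds, the imbalance $|\sum_j c_{t,j}\ind_{\bz_j\in B}|$ for every $B \in \D_{\le h}^{\otimes s}$ by $O\big(\|\bv_j\|_2\sqrt{\log(dn)}\big)$; (ii) accumulate these geometrically across the rounds (the initial $n^2$ IID set contributing only $O(1/n)$) to control the box discrepancy of $A_T$; (iii) expand $\err(A_T,h) = \sum_{\bk\neq 0}\widehat h(\bk)D(\bk)$ in Fourier modes, apply Cauchy--Schwarz to split off $\sigma_{\SO}(h)$, and control the remaining Fourier-side $L^2$ discrepancy $\sum_{\bk} |D(\bk)|^2/(\sum_u \prod_{j\in u}|k_j|)$ by the box discrepancies from (ii); here the random dyadic shift is what makes step (iii) go through, and the level cutoff $h = O(\log(dn))$ renders the modes with some $|k_j| > 2^h$ negligible. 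This yields $\E[(\err(A_T,h))^2] \le O_s\big((\log(dn))^{s+1}/n^2\big)\sigma_{\SO}(h)^2$, the exponent $s+1$ coming from $\|\bv_j\|_2^2 = O((\log(dn))^s)$ together with the one extra $\log(dn)$ from the subgaussian tail --- exactly the pattern by which the analogous $\|\bv_j\|_2^2 = O(d^s(\log(dn))^s)$ produces the factor $d^s(\log(dn))^{s+1}$ in Theorem \ref{thm:superposition}. Crucially, because the coordinate block $\{1,\dots,s\}$ is fixed in advance, no union bound over the $d$ coordinates enters, which is why the prefactor is $O_s(1)$ rather than the dimension-dependent factor of Theorem \ref{thm:main1}. (One may regard the construction as the $\gamma_1 = \dots = \gamma_s = 1$, $\gamma_{s+1} = \dots = \gamma_d = 0$ limit of $\wSubgTrans$, but Theorem \ref{thm:main1} cannot be quoted verbatim: it assumes strictly positive weights, and in the limit it would force $f \in \widetilde{\F}_s$ --- precisely the role played here by the decomposition $f = g + h$.)

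I expect the QMC estimate, and step (iii) in particular, to be the main obstacle: passing from the combinatorial box discrepancies to the Fourier functional $\sigma_{\SO}$ --- handling the averaging over the random shift, getting the high-frequency cutoff at level $h = O(\log(dn))$ right, and verifying that the geometric accumulation over the $T$ bisection rounds together with $\|\bv_j\|_2^2 = O((\log(dn))^s)$ produces exactly the exponent $s+1$. All of this machinery is already assembled for Theorem \ref{thm:main1}, so the remaining work is largely bookkeeping: tracking the $s$-dependent constants, checking that restricting the box system can only help, and assembling the MC and QMC bounds through the infimum over decompositions.
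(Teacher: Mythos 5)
Your overall architecture is exactly the paper's: the paper proves this theorem by observing that the truncation variant is $\wSubgTrans$ with $\gamma_1=\cdots=\gamma_s=1$, $\gamma_{s+1}=\cdots=\gamma_d=0$, computing $\|\bv_j\|_2^2=1+\prod_{j}(1+h\gamma_j^2)=O((\log(dn))^s)$, deducing $O_s((\log(dn))^{s+1})$-subgaussianity of the discrepancy vector as in Lemma~\ref{lem:subg_dyadic}, and then reusing the arguments of Theorems~\ref{thm:main1} and~\ref{thm:superposition} --- i.e.\ the QMC bound for $h\in\widetilde{\F}_s$, the MC-type bound $\E[(\err(A_T,g))^2]\le O(\tfrac{\log n}{n})\sigma(g)^2$ for arbitrary $g$ (a modification of \cite[Lemma 4.3]{BJ25}, invoked in Appendix~\ref{app:proof_2}, not in the proof of Theorem~\ref{thm:main1}), and the combination via linearity of $\err(A_T,\cdot)$ and the infimum over decompositions. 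Your norm computation, subgaussian parameter, and the explanation of why no factor of $d$ survives all agree with the paper.

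The one genuine problem is your step (iii). You propose to write $\err(A_T,h)=\sum_{\bk\neq 0}\widehat h(\bk)D(\bk)$, apply Cauchy--Schwarz to split off $\sigma_{\SO}(h)$, and then bound the ``Fourier-side discrepancy'' $\sum_{\bk}|D(\bk)|^2/\big(\sum_{\emptyset\neq u}\prod_{j\in u}|k_j|\big)$. This quantity does not admit the bound you need: for every fixed mode $\bk$ supported in $\{1,\dots,s\}$ one has $\E|D(\bk)|^2\asymp$ (rate) $\cdot\sum_{u}\prod_{j\in u}|k_j|$ at best, so each normalized term contributes $\Theta\big((\log(dn))^{s+1}/n^2\big)$ in expectation, and the sum over the infinitely many such $\bk$ diverges; the level cutoff $h$ does not make high-frequency modes negligible, since $|D(\bk)|$ does not decay in $\bk$ --- only $\widehat h(\bk)$ does. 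The paper avoids this by never separating the coefficients from the discrepancy: via Hlawka--Zaremba it writes each $\err_t^{\disc}$ as an inner product of the (jointly subgaussian) weighted discrepancy vector with the fixed vector $(P_h^\top)^{\otimes u}\by^{f_{\bs},u}$, and then uses Fourier orthogonality under the random shift (Corollary~\ref{cor:orthogonality_fourier_high_dim}) together with \cite[Lemma 5.3]{BJ25} to compute $\E_{\bs}\|(\overline{P}_h^\top)^{\otimes u}\by^{f_{\bs},u}\|_2^2\le\sum_{\bk\neq 0}|\widehat f(\bk)|^2\prod_{j\in u}|k_j|$ exactly --- an $L^2$ identity, not a Cauchy--Schwarz in mode space. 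Since you also say you would ``follow the transference analysis behind Theorem~\ref{thm:main1},'' the fix is simply to execute that analysis as the paper does and drop the per-mode Cauchy--Schwarz; as written, however, that step would fail.
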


\begin{remark}
For any function $f:[0,1]^d \to \R$ with effective dimension $s$ in the truncation sense, consider its ANOVA decomposition 
    \[
    f(\bx)=\sum_{u\subseteq \{1,2,\ldots,s\}}  f_u (\bx)+\sum_{u\nsubseteq \{1,2,\ldots,s\}}  f_u (\bx)=:f^{(1)}_{s}+f^{(2)}_{s}.
    \]
    Then the integration error satisfies
    \[
    \E[(\err(A_T,f))^2] \leq  O_s \bigg( \frac{\log n}{n}\sigma(f^{(2)}_{s})^2 + \frac{(\log (d n))^{s+1}}{n^2}\sigma_{\SO}(f^{(1)}_{s})^2 \bigg).
    \]
    Since $\sigma(f^{(1)}_{s})^2\ge 0.99 \sigma(f)^2$ and $\sigma(f^{(2)}_{s})^2\le 0.01 \sigma(f)^2$, the modified $\wSubgTrans$ algorithm is expected to perform well in high dimensions for functions with small effective truncation dimension. 
    \end{remark}

\section{Numerical Experiments}\label{sec:numerics}

We evaluate four variants of the Subgaussian Transference algorithm: the original formulation from \cite{BJ25}, Algorithm \ref{alg:wSubgTrans} from Section \ref{sec:algorithm}, and two new variants designed to account for effective dimension in the truncation and superposition senses. Our numerical study examines both the uniformity of the resulting point sets and their performance in integral approximation.

\subsection{Implementation Detail}

To reduce the algorithm runtime, several practical modifications were introduced in our implementations. First, the parameter $\lambda$ was set sufficiently small (e.g., $\lambda \approx 10^{-3}$) in Algorithm \ref{alg:balance}, and instead of failing, the algorithm chooses the sign $c_j$ greedily when $|\langle \bw_{j-1}, \bv_j\rangle| > \lambda$.
Second, instead of drawing $n^{2}$ candidate points, we use a reduced population of size $k n$, with $k$ a power of two (usually $k=16$). This lowers both time and memory complexity to $O(n)$. When $k=n$, we recover the precise oversampling procedure in the algorithm. When an effective dimension $s$ is specified, the dyadic refinement parameter is set to $h = \lceil \log_{2}(sn) \rceil$ rather than the nominal choice $h = \lceil \log_{2}(dn) \rceil$, capping the exponential growth of dyadic boxes in high dimensions. Lastly, we experiment with initializing the algorithm with the first $kn$ Sobol' points, rather than uniform IID random points to investigate the effect of the values of the starting set.

These cost-saving adjustments do come at the expense of some algorithmic precision—most notably due to the coarser dyadic refinement and smaller oversampling initialization. However, as we will see, they do not compromise performance enough to prevent improvements over MC methods.

\subsection{Resulting Star Discrepancy}

\begin{table}[t]
\centering
\begin{tabular}{|r|c|c|c|c|c|c|}
\hline
$n$ & Sobol' & IID & ST (IID, $n^2$) & ST (IID, $16n$) & ST (Sobol', $n^2$) & ST (Sobol', $16n$) \\
\hline
  8  & 0.312500 & 0.354866 & 0.301768 & 0.325006 & 0.305120 & 0.318114 \\
 16  & 0.171875 & 0.266701 & 0.199798 & 0.190557 & 0.201280 & 0.206695 \\
 32  & 0.089844 & 0.182864 & 0.140638 & 0.149751 & 0.125465 & 0.120430 \\
 64  & 0.053711 & 0.148777 & 0.084015 & 0.093255 & 0.088080 & 0.084042 \\
128  & 0.025146 & 0.106092 & 0.053085 & 0.059950 & 0.053417 & 0.051763 \\
256 & 0.014587 & 0.082900 & 0.033081 & 0.034835 & 0.032951 & 0.032138 \\
\hline
\end{tabular}
\caption{Star discrepancy: Sobol' and IID baseline and $\SubgTrans$ mean (ST) with different initializations of distribution and population size.}
\label{tab:stardisc_mean}
\end{table}

We implement the original $\SubgTrans$ algorithm from \cite{BJ25} (or equivalently, $\wSubgTrans$ with unit weights) in two dimensions. For each $n\in \{8,16,32,64,128, 256\}$, we generate output sets from population sizes of $kn$ (i) $k=16$, and (ii) $k=n$ to test the algorithm for robustness. To assess the geometric uniformity of the resultant sets and to illustrate the transference principle (see Lemma \ref{lem:transference}), we compute the star discrepancy \eqref{eq:star_disc} for each output set and report the mean for each configuration. As reference, we also report the star discrepancy of the unscrambled Sobol' set and the average of $16$ independent IID uniform point sets of the same $n$.  

From the results contained in Table \ref{tab:stardisc_mean}, Sobol' generally attains the lowest star discrepancy for $n\geq16$. For example, at $n=128$, the Sobol' set reports a star discrepancy of $0.025$, whereas $\SubgTrans$ averages range from $0.052$ to $0.060$ depending on initialization. However, for $n=128$, we found in pur investigation that the minimum star discrepancy reported was $0.040697$ for Sobol' initialization and $k=n$, and $0.39367$ for IID initialization and $k=n$. This brings us to our key point: initialization matters; with Sobol' starts, $k=16$ tends to slightly beat the $k=n$ start for $n\geq32$; with uniform starts, the $k=n$ start tends to be better at larger $n$. 

We also note that $\SubgTrans$ with IID initialization is always superior in comparison to IID. This suggests that in applications where only IID samples are available (and thus Sobol' is not applicable), subgaussian transference provides a competitive alternative to MC.

\subsection{Small Truncation Dimension}

\begin{figure}
    \centering
    \includegraphics[width=0.66\linewidth]{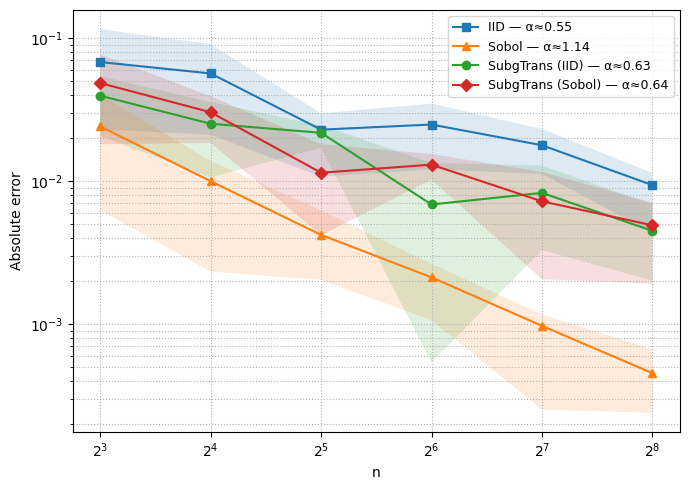}
    \caption{MAE with IQR for the approximation of the integral of function $f$ using $\wSubgTrans$ (with IID and Sobol' initializations), Sobol' and IID.}
    \label{fig:test_integral}
\end{figure}

We now move to the high-dimensional integration setting, restricting attention to problems of low effective dimension due to computational constraints. To test the truncation variant of $\wSubgTrans$ introduced in Section \ref{subsec:truncation}, we consider the function $$f(\boldsymbol{x})= \sum_{i=1}^d (-1)^i \prod_{j=1}^i x_j$$ which for nominal dimension $d=100$ is known to have a truncation effective dimension of $2$ \cite{KUCHERENKO2001}. The true integral value can be calculated analytically as $\frac{(1-(-1/2)^d)}{-3}$. We approximate this integral using sampling nodes generated by $\wSubgTrans$ with weight vector $(1,1,0,\ldots,0)$, alongside IID and scrambled Sobol’ baselines, and compute the absolute error to the known true value. The convergence rates are also reported in the form $\mathcal{O}(n^{-\alpha})$.

Figure \ref{fig:test_integral} shows that the Sobol’ sequence provides the strongest performance, with rapid decay of error and low variability across replications. Nevertheless, $\wSubgTrans$ delivers a clear improvement over IID sampling, achieving faster convergence ($\alpha \approx 0.65$ vs.\ $\alpha \approx 0.55$ for IID) and consistently smaller absolute errors. It seems initialization has little effect in this example: whether starting from IID or Sobol’ inputs, the partitioned sets exhibit nearly identical accuracy, indicating that the algorithm robustly adapts to the effective dimension structure. This highlights the central benefit of $\wSubgTrans$: even in very high nominal dimension ($d=100$), exploiting the effective dimension knowledge of a problem and restricting the action to the important variables yields point sets that outperform IID by imposing low-discrepancy structure where it matters.

\subsection{Small Superposition Dimension}

\begin{figure}
    \centering
    \includegraphics[width=0.66\linewidth]{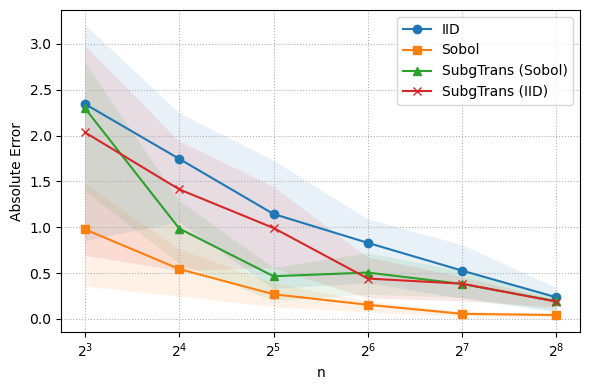}
    \caption{Approximation error of an Asian call option pricing problem using $\SubgTrans$ (with IID and Sobol' initializations), Sobol', and IID.}
    \label{fig:asian_results}
\end{figure}

As a benchmark application with low effective dimension, we consider pricing an Asian call option on a stock following geometric Brownian motion; see \cite{wangsloan05,lemowen01,faulem09}. The option payoff depends on the average of the asset price observed at monthly intervals over one year, leading to a $d=12$ dimensional integration problem. The present value is given by
\begin{equation}
C_0 = \mathbb{E} \left[ e^{-rT} \left( \frac{1}{d} \sum_{j=1}^d S(u_j) - K \right)^+ \right].
\end{equation}\label{eq:C0expectation}
where $S(u)$ is the stock price at time $u$ under the risk-neutral measure, $K$ is the strike, $T$ the maturity, and $r$ the risk-free rate. Assuming geometric Brownian motion with volatility $\sigma$, this expectation admits an integral representation over $[0,1]^d$ in terms of inverse normal transforms of uniform random variables \cite{faulem09}. Using parameters $S(0)=50$, $K=45$, $T=1$, $r=0.05$, and $\sigma=0.3$, the true option value is $C_0 = 7.2110915$, estimated in advance to high precision using 2M Sobol’ points.

Like many finance problems, this integral has a much lower effective dimension than its nominal value: the superposition effective dimension is approximately $2$, so most of the variance is explained by one- and two-way interactions among inputs. To exploit this, we apply the $\wSubgTrans$ algorithm with unit coordinate weights with $s=2$, i.e., the algorithm will only construct the incidence vector with $\D_{\leq h}^{\otimes 2}$, dyadic intervals upto an including order $2$. We generate sets of size $n \in \{8,16,32,64,128,256\}$ from initial populations of size $16n$. We again compare against IID and scrambled Sobol' points.

We see that as per the previous example, Sobol' still provides the sharpest convergence, but $\SubgTrans$ tracks much more closely to Sobol' for small and moderate $n$, narrowing the gap substantially relative to IID. The shaded error bands reveal that $\SubgTrans$ points also reduce variability across repetitions, producing more stable estimates than IID even at coarse sample sizes. Once again, we see that by partitioning the population set via subgaussian transference in a targeted manner to obtain sampling nodes, when something is known about the problem, one can achieve much better results than random sampling.

\section{Discussion}\label{sec:discussion}

In this work we extended the original subgaussian transference algorithm of \cite{BJ25} to the weighted setting, as is standard in modern QMC methods, and introduced effective-dimension variants targeting both truncation and superposition structure. On the theoretical side, we extended the error analysis in \cite{BJ25} to capture the weighted and low effective-dimension structure.
This provides error guarantees that depend explicitly on the chosen weights, and hence on the effective dimension of the integrand, rather than on the full Hardy–Krause variation as in the classical Koksma–Hlawka inequality. In high-dimensional settings, this provides more interpretable control.

The algorithms developed here are also related to the \emph{subset selection} problem \cite{cle22}, which seeks a single subset minimizing star discrepancy. In contrast, subgaussian transference partitions a large pool into many $n$-point sets, naturally providing replications that are useful in error approximation.

On the computational side, our modifications of reducing candidate pool size, adjusting refinement depth, and setting $\lambda$ small, kept runtimes tractable although paying a cost of accuracy at times. Generally speaking, scaling remains an issue: the exponential growth of dyadic boxes in the full algorithm makes naive application infeasible in very high dimensions. The effective-dimension variants alleviate this to some degree by reducing the combinatorial complexity, but further work is needed to bring the approach to genuinely high dimensional problems.

Overall, subgaussian transference provides theoretically grounded guarantees for integration in weighted spaces, alongside an algorithmic framework that produces replicated QMC point sets adapted to effective dimension. In all of the numerical experiments, subgaussian transference (with IID initializations) outperforms MC, providing a more favorable alternative in situations where only random samples are accessible.
However, we have also seen that it does not yet outperform standard QMC constructions in practice and further work is required to make the algorithm competitive.

\paragraph{Acknowledgments.} The third author is supported by NSF grant DMS-2316011.
The authors would like to thank Zexin Pan, Art B. Owen and Fred J. Hickernell for helpful discussions.

\bibliographystyle{alpha}
\bibliography{bib.bib,NMK25.bib}

\appendix
\section{Appendix}
\subsection{Proofs of results in Section \ref{sec:algorithm}}
\label{app:proof_1}

\smallskip
{\bf Notation.}  For integer $d>0$, we write $[d] :=\{1, \ldots, d\}$. 
We use bold lowercase letters, e.g., $\bz=(z_1,\ldots,z_d)$, to denote vectors, with $z_j$ denoting the $j^{\mathrm{th}}$ coordinate of $\bz$. For a collection of vectors, we denote the $j^{\mathrm{th}}$ vector by $\bz_j$, and refer to its $k^{\mathrm{th}}$ coordinate as $z_{j,k}$. For a subset of coordinates $u \subseteq [d]$, we denote its cardinality by $|u|$, and write $\overline{u}:= [d] \setminus u$ for its complement. Given a vector $\bx \in \R^d$ and a subset $u \subseteq [d]$, we denote by $\bx^{u} \in \R^{|u|}$ the restriction of $\bx$ to the coordinates indexed by $u$.

\smallskip
{\bf Dyadic Intervals.} Let $j \in \mathbb{N}$, we denote by $\D_j$ the set of all left-open dyadic intervals in $(0,1]$ of length $2^{-j}$, i.e., $\D_j := \{(\ell/2^j, (\ell+1)/2^j]: \ell = 0,\ldots, 2^j-1 \}$, and call $j$ the level of these dyadic intervals. For $h \in \mathbb{N}$, define $\D_{\leq h} := \bigcup_{j \leq h} \D_j$ as the set of all dyadic intervals of level less than or equal to $h$. In higher dimensions, $\D_{\leq h}^{\otimes d} := \{I_1 \times \cdots \times I_d: I_j \in \D_{\leq h} \text{ for all $j \in [d]$}\}$ denotes all $d-$dimensional dyadic boxes with level at most $h$ in each dimension. 

\smallskip
{\bf Prefix Intervals.} For any $x \in [0,1]$, we denote left-open prefix interval $C_{  x} := (0,x]$, with $C_1 = (0,1]$. For any $\bz=(z_1,\ldots,z_d) \in [0,1]^d$, we denote $C_{\bz} := (0,z_1] \times \cdots \times (0,z_d]$ the left-open corner at $\bz$.  We also denote $\mathcal{C}^d := \{C_{\bz}: \bz \in [0,1]^d\}$ the set of all left-open corners. For $h \in \mathbb{N}$, we use $\mathcal{C}_h$ to denote all left-open prefix intervals whose length is an integer multiple of $2^{-h}$ (excluding\footnote{This is again due to the slight technicality caused by the boundary.} $(0,1]$), i.e. $\mathcal{C}_h := \{C_{ \ell/2^h}: \ell = 0, \ldots, 2^h-1\}$.  We use $\mathcal{C}_h^{\otimes d} := \{I_1 \times \cdots \times I_d: I_j \in \mathcal{C}_h \text{ for all $j \in [d]$}\}$ to denote all left-open corners at grid points in $2^{-h} \Z^d \cap [0,1)^d$.  

\smallskip
{\bf Dyadic Decomposition.} {\em Dyadic decomposition} refers to decomposing sub-intervals of $(0,1]$ into a minimal disjoint union of  dyadic intervals. We only use dyadic decomposition for prefix intervals. 
For any $h \in \mathbb{N}$, we define $P_h \in \{0,1\}^{|\mathcal{C}_h| \times (|\D_{\leq h}|-1)}$ the dyadic decomposition matrix of $\mathcal{C}_h$, where the number of columns is $|\D_{\leq h}|-1$ because we have chosen to exclude the unused $(0,1] \in \D_{\leq h}$ in the decomposition. 
In particular, for the $\ell$th prefix interval $I_\ell \in \mathcal{C}_h$ and the $r$th dyadic interval $J_r \in \D_{\leq h} \setminus \{C_1\}$, we have $(P_h)_{\ell, r} = 1$ if $J_r$ is used in the dyadic decomposition of $I_\ell$, and $0$ otherwise.

As each prefix in $\mathcal{C}_{\leq h}$ is a sum of at most $h$ dyadic intervals in $\D_{\leq h}$, each row of $P_h$ has at most $h$ ones. However, the columns of $P_h$ could be dense with $\Omega(2^h)$ entries. For our analysis, we will in fact crucially exploit the structure of the columns of $P_h$. 
 For technical convenience and following \cite[Lemma 2.2]{BJ25}, we actually work with a slight variant of $P_h$ that has more structured columns, which we call the {\em structured decomposition matrix} and denote it as $\overline{P}_h\in \{0,1\}^{|\mathcal{C}_h| \times (|\D_{\leq h}|-1)}$. In higher dimension, the dyadic decomposition of $\mathcal{C}_h^{\otimes d}$ into dyadic boxes $\D_{\leq h}^{\otimes d}$ is given by $P_h^{\otimes d}$, and the structured decomposition tensor is given by $(\overline{P}_h)^{\otimes d}$.

\subsubsection{Combinatorial Discrepancy, Transference Principle, and Subgaussianity}
\label{subsubsec:disc_transference_subg}

Let us consider an iteration $t \in \{0,\ldots, T-1\}$ of the $\wSubgTrans$ algorithm where $A_{t+1}$ is produced from $A_t$ by running $\SubgVecBal$.
For any set $C \subseteq [0,1]^d$, let
\begin{align*}
\disc_t(C) :=  | \{\bz_j \in C \cap A_t \text{ with } c_{t,j} = +1\}| - | \{\bz_j \in C \cap A_t \text{ with } c_{t,j} = -1\}|
\end{align*}
denote the \emph{combinatorial discrepancy} of $C \cap A_t$. In particular, we denote $\disc_t(\bz) := \disc_t(C_{\bz})$ for any $\bz \in [0,1]^d$. For each $t \in \{0, \ldots, T\}$, we denote $n_t := |A_t| = 2^{-t}n_0$. 
For any measurable set $C \subseteq [0,1]^d$, 
denote $n_t(C) := |C \cap A_t|$, and \emph{continuous discrepancy} $h_t(C) := \vol(C) - n_t(C)/n_t$. We denote $h_t(\bz):=h_t(C_{\bz})$ for any $\bz\in [0,1]^d$.

Let \(\mathcal{I}:=\D_{\le h}^{\otimes d}\setminus\{C_{\ind}\}\) be the collection of left-open dyadic boxes up to level \(h\) in dimension \(d\), excluding the box \(C_{\ind}\).
Define the (unweighted) combinatorial discrepancy vector \(\bfd^{t,\D}\in\mathbb{Z}^{|\mathcal{I}|}\) by
\[
(\bfd^{t,\D})_B := \disc_t(B), \qquad B\in\mathcal{I}.
\]
Define the weighted combinatorial discrepancy vector \(\bfwd^{t,\D}\in\mathbb{R}^{|\mathcal{I}|}\) by
\[
(\bfwd^{t,\D})_B := \wdisc_t(B):=\gamma(B)\,\disc_t(B), \qquad B\in\mathcal{I},
\]
where $\gamma(B)=\prod_{j\in [d]: B_j\ne (0,1]} \gamma_j$ is the product weight defined in \eqref{eq:box_weight}.

\medskip
The continuous discrepancy and combinatorial discrepancy are tightly connected through the transference principle, which enables the construction of low continuous discrepancy sets by recursively applying colorings that minimize combinatorial discrepancy. This connection is made precise in~\cite[Lemma 3.4]{BJ25}:

\begin{lemma}[{Transference Principle, \cite[Lemma 3.4]{BJ25}}]\label{lem:transference}
For any measurable set $C \subseteq [0,1]^d$, we have
\[
h_T(C) = h_0(C) + \sum_{t=0}^{T-1} \frac{\disc_t(C)}{n_t}.
\]
\end{lemma}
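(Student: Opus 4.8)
The statement to prove is the Transference Principle (Lemma~\ref{lem:transference}):
\[
h_T(C) = h_0(C) + \sum_{t=0}^{T-1} \frac{\disc_t(C)}{n_t}.
\]

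The plan is to establish a telescoping identity by analyzing how the continuous discrepancy $h_t(C)$ changes in a single step of the algorithm, and then sum over $t = 0, \ldots, T-1$. Concretely, fix an iteration $t$ and recall that $A_t$ is split into $A_{t+1}^{(2i)}$ and $A_{t+1}^{(2i+1)}$ according to the sign of the balanced coloring $\bc_t$, with $A_{t+1}$ denoting whichever half we are tracking. Since the coloring is balanced, $n_{t+1} = |A_{t+1}| = n_t/2$, consistent with $n_t = 2^{-t} n_0$. The core of the argument is the one-step relation
\[
h_{t+1}(C) = h_t(C) + \frac{\disc_t(C)}{n_t},
\]
which I would verify by a direct count. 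First I would write $n_{t+1}(C) = |C \cap A_{t+1}|$. Without loss of generality suppose $A_{t+1} = \{\bz_j \in A_t : c_{t,j} = +1\}$ (the other case is symmetric, differing only by a sign, and the definition of $\disc_t$ must be read with the matching sign convention); then among the $n_t(C) = |C \cap A_t|$ points of $A_t$ lying in $C$, exactly $(n_t(C) + \disc_t(C))/2$ receive color $+1$ by the very definition of $\disc_t(C)$ as (number of $+1$'s in $C \cap A_t$) $-$ (number of $-1$'s in $C \cap A_t$), together with the fact that these two counts sum to $n_t(C)$. Hence $n_{t+1}(C) = (n_t(C) + \disc_t(C))/2$, and therefore
\[
h_{t+1}(C) = \vol(C) - \frac{n_{t+1}(C)}{n_{t+1}} = \vol(C) - \frac{n_t(C) + \disc_t(C)}{2 \cdot (n_t/2)} = \vol(C) - \frac{n_t(C)}{n_t} - \frac{\disc_t(C)}{n_t} = h_t(C) - \frac{\disc_t(C)}{n_t}.
\]

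Here I should be careful about the sign: comparing with the claimed identity, I want $h_t(C) = h_{t+1}(C) + \disc_t(C)/n_t$ when summed appropriately, so the one-step relation I actually need is $h_{t+1}(C) - h_t(C) = -\disc_t(C)/n_t$ for the $+1$ branch, and $+\disc_t(C)/n_t$ for the $-1$ branch; the statement of the lemma as written corresponds to the convention used in the paper for which branch $A_{t+1}$ refers to and how $\disc_t$ is signed, so I would align the signs with the definitions in Section~\ref{subsubsec:disc_transference_subg} rather than fix a branch arbitrarily. Once the correct one-step identity $h_{t+1}(C) - h_t(C) = \pm\disc_t(C)/n_t$ is in hand with the sign matching the convention so that it reads $h_{t+1}(C) = h_t(C) + \disc_t(C)/n_t$ after absorbing signs into $\disc_t$, the result follows immediately by telescoping: summing $h_{t+1}(C) - h_t(C) = \disc_t(C)/n_t$ over $t = 0, \ldots, T-1$ gives $h_T(C) - h_0(C) = \sum_{t=0}^{T-1} \disc_t(C)/n_t$, which rearranges to the claim.

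The main (and really only) obstacle is bookkeeping the sign conventions correctly — making sure that the definition of $\disc_t(C)$ as $(\#\{+1 \text{ in } C\}) - (\#\{-1 \text{ in } C\})$ is consistently matched with which of $A_{t+1}^{(2i)}, A_{t+1}^{(2i+1)}$ is being followed down the recursion tree, and that the factor-of-two cancellation between the halving of the point count and the halving of $n_t$ is handled without an off-by-a-factor error. There is no analytic difficulty: the identity is exact, holds for every measurable $C$ simultaneously, and requires no probabilistic input about $\SubgVecBal$ at all (the coloring could be arbitrary, as long as it is balanced). I would also note explicitly that the random shift $\bs$ and the dyadic structure play no role in this lemma — it is purely a combinatorial identity about iterated balanced partitioning — so the proof is just the single-step count plus a telescope.
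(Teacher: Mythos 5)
Your proof is correct and is essentially the standard argument: the paper itself states this lemma as a citation of \cite[Lemma 3.4]{BJ25} without reproving it, and the proof there is exactly your one-step counting identity (using balancedness so that $n_{t+1}=n_t/2$) followed by telescoping over $t$. Your sign bookkeeping is also right: with $\disc_t(C)$ defined as $(\#\,{+1}) - (\#\,{-1})$ on $C\cap A_t$, the identity $h_{t+1}(C)=h_t(C)+\disc_t(C)/n_t$ holds when the retained half is the $-1$-colored one, and for the other branch one simply re-signs the coloring along the chosen chain (harmless, since the coloring is symmetric and balance is preserved), which is the convention under which the lemma is stated.
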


By Lemma \ref{lem:transference}, we can control the integration error using the final point set \(A_T\) by bounding the error at each intermediate step \(t\in\{0,\ldots,T-1\}\).

Moreover, the \(\wSubgTrans\) algorithm ensures subgaussianity of both the coloring vector \(\bc_t\) and the weighted combinatorial discrepancy vector \(\bfwd^{t,\D}\) as follows:
\begin{lemma}[Subgaussianity for Dyadic Boxes] \label{lem:subg_dyadic}
For any $0 \leq t \leq T-1$, $\wSubgTrans$ satisfies (1) $\disc_t([0,1]^d) = 0$, and (2) the random vector $(\bc_t, \bfwd^{t,\D})$ is $O\big(d \log (dn)\prod_{j\in [d]}(1+\gamma_j^2\log n)\big)$-subgaussian\footnote{A random vector $\by \in \R^m$ is called $\sigma^2$-subgaussian if for all $\bz \in \R^m$, one has $\E[\exp(\langle \by, \bz\rangle)] \leq \exp(\sigma^2 \|\bz\|_2^2/2)$. }. 
\end{lemma}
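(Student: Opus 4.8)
## Proof Plan for Lemma~\ref{lem:subg_dyadic}

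The plan is to prove the two claims separately, with most of the work going into the subgaussianity bound. For claim (1), I would observe that $\SubgVecBal$ produces a \emph{balanced} coloring $\bc_t$, meaning $\bc_t$ has equally many $+1$'s and $-1$'s. Since $C_{\ind} = [0,1]^d$ contains all of $A_t$, we get $\disc_t([0,1]^d) = \sum_j c_{t,j} = 0$ immediately from balancedness. (One should note that the inclusion of the $\be^{A_t}_j$ block in the vectors $\bv_j$ is what the balanced-pairing construction of $\SubgVecBal$ acts on; its presence does not affect this conclusion.)

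For claim (2), the strategy is to reduce the subgaussianity of $(\bc_t, \bfwd^{t,\D})$ to the subgaussianity of the raw coloring $\bc_t$ together with the \emph{box-incidence} vector, and then apply the performance guarantee of \textsf{Self-Balancing Walk} (Theorem~\ref{thm:self_bal_walk}, invoked via $\SubgVecBal$). Concretely: the $\SubgVecBal$ algorithm is run on the vectors $\bv_j = \big(\be^{A_t}_j, (\gamma(B)\ind_{\{\bz_j\in B\}})_{B\in\D_{\le h}^{\otimes d}}\big)$. I would first verify these have $\ell_2$-norm $O(\sqrt{\prod_{j\in[d]}(1+\gamma_j^2 \log n)})$: the $\be^{A_t}_j$ part contributes $1$, and the weighted-incidence part contributes $\sum_{B\ni \bz_j}\gamma(B)^2 = \prod_{j\in[d]}\big(1+\sum_{\ell=1}^{h}\gamma_j^2\big) = \prod_{j\in[d]}(1+h\gamma_j^2)$, using that $\bz_j$ lies in exactly one dyadic box of each level $0,\dots,h$ per coordinate, and $h = O(\log(dn))$. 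After rescaling the $\bv_j$ to unit norm, the Self-Balancing Walk guarantee says the resulting signed sum $\bw_n = \sum_j c_{t,j}\bv_j$ is $O(\log(mn/\delta))$-subgaussian in the appropriate sense, where $m = |A_t| + |\D_{\le h}^{\otimes d}|$; undoing the rescaling multiplies the subgaussian parameter by the squared norm bound $O(\prod_j(1+\gamma_j^2\log n))$, and since $\log m = O(d\log(dn))$ (because $|\D_{\le h}^{\otimes d}| \le (h+1)^d$), this yields the claimed $O\big(d\log(dn)\prod_{j\in[d]}(1+\gamma_j^2\log n)\big)$ parameter.

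The remaining step is to translate subgaussianity of $\bw_n = \sum_j c_{t,j}\bv_j$ into subgaussianity of the concatenated vector $(\bc_t, \bfwd^{t,\D})$. The coloring block $\bc_t$ appears directly as the $\be^{A_t}_j$-coordinates of $\bw_n$ (since $\sum_j c_{t,j}\be^{A_t}_j = \bc_t$), and the coordinate of $\bw_n$ indexed by box $B$ is exactly $\sum_{j: \bz_j\in B} c_{t,j}\gamma(B) = \gamma(B)\disc_t(B) = (\bfwd^{t,\D})_B$, which is precisely the weighted discrepancy vector restricted to $\mathcal{I} = \D_{\le h}^{\otimes d}\setminus\{C_{\ind}\}$. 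So $(\bc_t, \bfwd^{t,\D})$ is (a coordinate projection of, or essentially equal to) $\bw_n$, and subgaussianity is inherited by linear images. A small technical point is the exclusion of the box $C_{\ind}$ from $\mathcal{I}$, but dropping a coordinate only decreases the subgaussian parameter, so this is harmless.

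I expect the main obstacle to be bookkeeping the exact subgaussianity transfer through $\SubgVecBal$ rather than the plain \textsf{Self-Balancing Walk}: the balanced variant pairs adjacent vectors as $\bv_1-\bv_2, \bv_3-\bv_4,\dots$ before running the walk, which doubles norms by a constant and requires checking that the pairing does not disturb the identification of the output coordinates with $\disc_t(B)$ and with $\bc_t$. This is where I would need to be careful, but it should follow the same reduction already carried out in \cite{BJ25} for the unweighted case (their Lemma analogous to this one), with the only genuinely new ingredient being the $\ell_2$-norm computation $\|\bv_j\|_2^2 = 1 + \prod_{j\in[d]}(1+h\gamma_j^2)$ that introduces the product-weight factor.
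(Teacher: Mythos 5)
Your proposal is correct and follows essentially the same route as the paper: claim (1) from the balancedness of the $\SubgVecBal$ coloring, and claim (2) by computing $\|\bv_j\|_2^2 = 1+\prod_{j\in[d]}(1+h\gamma_j^2)$, feeding the (rescaled) vectors into the Self-Balancing Walk guarantee with $\log(mn)=O(d\log(dn))$, and identifying the coordinates of the signed sum $\sum_j c_{t,j}\bv_j$ with $\bc_t$ and $\gamma(B)\disc_t(B)$. The only slip is the count $|\D_{\le h}^{\otimes d}|\le (h+1)^d$ (the correct size is $(2^{h+1}-1)^d$, i.e.\ $m=O(2^{(h+1)d})$ as the paper uses), but since this still gives $\log m = O(hd) = O(d\log(dn))$, the final subgaussian parameter is unaffected.
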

\begin{proof}
The first claim follows because $\SubgVecBal$ always produces a balanced coloring.

The subgaussian property is based on the following result for the $\SubgVecBal$ algorithm:
\begin{theorem}[\cite{ALS21}] \label{thm:self_bal_walk}
Given vectors $\bv_1, \ldots, \bv_n \in \R^m$ with $\|\bv_j\|_2 \leq 1$ online, the \textsf{Self-Balancing Walk} computes a symmetric random coloring $\bc=(c_1,\ldots,c_n) \in \{\pm 1\}^n$, 
such that $\sum_{j =1}^t c_j \bv_j$ is $O(\log (mn))$-subgaussian for all $t \in [n]$ whp. 
Moreover, the algorithm runs in $O(\sum_{j \in [n]} \nnz(\bv_j))$ time.\footnote{$\nnz(\bz)$ denotes  number of non-zero coordinates in a vector $\bz$.} 
\end{theorem}
For every point $\bz_j \in A_t$ and every subset $u\subseteq [d]$, the point $\bz_j$ lies in exactly $h^{|u|}$ dyadic boxes in $\D_{\leq h}^{\otimes d}$ whose set of the nontrivial coordinate directions is precisely $u$. Thus 
 \begin{align*}
\|\bv_j\|_2^2 &=1+\sum_{B\in  \D_{\leq h}^{\otimes d} } w^2(B) \ind_{\{\bz_j \in B\}}\\
&=1+\sum_{B\in  \D_{\leq h}^{\otimes d}} \ind_{\{\bz_j \in B\}} \prod_{j\in [d]: B_j\ne (0,1]} \gamma_j^2 \\
&=1+\sum_{u\subseteq [d]} h^{|u|} \prod_{j\in u}\gamma_j^2 \\
&=1+\prod_{j\in [d]}(1+h\gamma_j^2),
\end{align*}
and the claim follows from Theorem \ref{thm:self_bal_walk} on the at most $n^2$ vectors $\bv_j$, $m=O(2^{(h+1)d})$ and $h= O(\log (dn))$. 
\end{proof}

\subsubsection{Proof of Theorem \ref{thm:main1}}

As noted in \cite[Remark 1.1]{BJ25}, requiring that $f$ has a Fourier series uses some (mild) smoothness assumptions, and it also requires $1$-periodicity, but this is without loss of generality via standard folding. These can be removed by exploiting that our method uses random samples and that the Fourier series of $f$ converges to $f$ in a $\ell_2$ sense. We ignore these technicalities as they are standard and do not give any new insight.

We are now ready to prove Theorem \ref{thm:main1}.

\begin{proof}[Proof of Theorem \ref{thm:main1}]

Fix an arbitrary 1-periodic function $f \in L^2([0,1]^d)$ with a continuous mixed derivative.
We can equivalently view $\wSubgTrans$ as fixing $\D_{\leq h}^{\otimes d}$ and applying a random shift $\bs \sim [0,1)^d$ to $f$ which results in $f_{\bs}(\bz) = f(\bz + \bs)$. Denote $\E_{\bs}$ the expectation w.r.t. random shift $\bs$. By Hlawka-Zaremba formula, the integration error using point set $A_T$ is
\begin{align*}
\err(A_T,f_{\bs}) = \sum_{\emptyset \neq u \subseteq [d]} (-1)^{|u|-1} \int_{Q_u} h_T(\bz) \cdot \partial_u f_{\bs}(\bz)  d \bz ,
\end{align*}
where $h_T(\bz) = \vol(C_{\bz}) - |C_{\bz}\cap A_T|/n_T$ is the (scaled) continuous discrepancy of the corner $C_{\bz}$,  $Q_u:=\{\bz\in [0,1]^d:z_j=1 \text{ for all } j\notin u\}$ denotes the (shifted) $|u|$-dimensional cube in $\R^d$, and $\partial_u f_{\bs}(\bz) = \frac{\partial^{|u|} f_{\bs}(\bz)}{\partial \bz_u}$. By Lemma \ref{lem:transference}, we can write
\[
h_T(\bz) = h_0(\bz) + \sum_{t=0}^{T-1} \frac{\disc_t(\bz)}{n_t}.
\]
Substituting this expression into the error representation yields
\begin{align}\label{eq:error_decomp_high_dim}
    \err(A_T,f_{\bs}) &= \sum_{\emptyset \neq u \subseteq [d]} (-1)^{|u|-1} \int_{Q_u} h_T(\bz) \cdot \partial_u f_{\bs}(\bz)  d \bz \nonumber\\
    &=\sum_{\emptyset \neq u \subseteq [d]} (-1)^{|u|-1} \int_{Q_u} h_0(\bz) \cdot \partial_u f_{\bs}(\bz)  d \bz + \sum_{t=0}^{T-1}\Bigg(\sum_{\emptyset \neq u \subseteq [d]} (-1)^{|u|-1} \int_{Q_u} \frac{\disc_t(\bz)}{n_t} \cdot \partial_u f_{\bs}(\bz)  d \bz\Bigg) \nonumber\\
    &=:\err(A_0,f_{\bs}) + \sum_{t=0}^{T-1} \err^{\disc}_t(f_{\bs}).
\end{align}

First, since $A_0$ consists of $n^2$ independent random samples,
\begin{align}\label{eq:proof_aux1}
\E[(\err(A_0,f_{\bfs}))^2]\le \frac{\sigma(f)^2}{n^2}.
\end{align}
We next analyze the discrepancy error $\err^{\disc}_t$ in \eqref{eq:error_decomp_high_dim}. Observe that
\begin{align}\label{eq:disct-d}
    \err^{\disc}_{t}(f_{\bs}) & =\sum_{\emptyset \neq u \subseteq [d]} (-1)^{|u|-1} \int_{Q_u} \frac{\disc_{t}(\bz)}{n_t} \cdot \partial_u f_{\bs}(\bz) d \bz \nonumber\\
    &= \sum_{\emptyset \neq u \subseteq [d]} (-1)^{|u|-1} \int_{Q_u} \frac{\gamma(u)\disc_{t}(\bz)}{n_t} \cdot \frac{\partial_u f_{\bs}(\bz)}{\gamma(u)} d \bz \nonumber\\
    &= \sum_{\emptyset \neq u \subseteq [d]} (-1)^{|u|-1} \int_{Q_u} \frac{\wdisc_{t}(C_{\bz})}{n_t} \cdot \frac{\partial_u f_{\bs}(\bz)}{\gamma(u)} d \bz,
\end{align}
where $\gamma(u)=\prod_{j\in u}\gamma_j$ is the product weight.

At a high level, we analyze the contribution from each $\emptyset \neq u \subseteq [d]$ using tensorization, and then exploit the joint subgaussianity of discrepancy on each $Q_u$ to combine them. 

Let us fix $\emptyset \neq u \subseteq [d]$ and consider the integral in \eqref{eq:disct-d} for $u$. Let us assume wlog that the combinatorial discrepancy $\disc_t(\bz)$ on $Q_u$ is a step function with values changing only when any coordinate reaches $2^{-h} \Z \cap [0,1)$. 
This way, we may view $\disc_t(\bz)$ restricted to $Q_u$ as a vector\footnote{We have ignored the $0$-measure set of corners $C \subseteq Q_u$ with some dimension being $(0,1]$.} $\bfd^{t, \mathcal{C}, u} \in \Z^{|\mathcal{C}_h^{\otimes u}|}$ with coordinates\footnote{Strictly speaking, the index $\bfj$ here should really range over $\{0, \ldots, 2^h-1\}^u \times \{2^h\}^{\overline{u}}$ so that the coordinates in $\overline{u}$ of $\bfj/2^h$ are $1$. But to keep notation clear, even though we are working with $d$-dimensional vectors we only display the coordinates of $\bfj$ in $u$ 
which are the only ones that can vary. The same convention applies when we talk about corners $\mathcal{C}_h^{\otimes u}$, dyadic boxes $\mathcal{D}_{\leq h}^{\otimes u}$, and decomposition $P_h^{\otimes u}$.} 
$\bfd^{t, \mathcal{C}, u}_{\bfj} = \disc_t(\bfj/2^h)$ for all $\bfj \in \{0, \ldots, 2^h-1\}^u$ corresponding to corners $\mathcal{C}_h^{\otimes u}$.  Similarly, we may view $\wdisc_t(\bz)$ restricted to $Q_u$ as a vector $\bfwd^{t, \mathcal{C}, u} \in \Z^{|\mathcal{C}_h^{\otimes u}|}$ with coordinates $\bfwd^{t, \mathcal{C}, u}_{\bfj} = \wdisc_t(\bfj/2^h)$ for all $\bfj \in \{0, \ldots, 2^h-1\}^u$ corresponding to corners $\mathcal{C}_h^{\otimes u}$.

We also define the vector $\by^{f_{\bs},u} \in \R^{|\mathcal{C}_h^{\otimes u}|}$ as
\[
\by^{f_{\bs},u}_{\bfj} := \int_{[\bfj/2^h, (\bfj+ \ind_u)/2^h] } \partial_u f_{\bs}(\bz^u, \ind_{\overline{u}}) \cdot d \bz^u. 
\]
for all $\bfj \in \{0, \ldots, 2^h-1\}^u$. It is possible to further express $\by^{f_{\bs},u}_{\bfj}$ as a sum of function values at the vertices of $[\bfj/2^h, (\bfj+ \ind_u)/2^h] \times \ind_{\overline{u}}$ with mixed signs, but we do not need such a formula. 
Next, observe that the vector $\by^{f_{\bs},u}$ takes a tensor form if the function $f_{\bs}$ is the product of functions of each coordinate. 

\begin{observation}[Tensor Form for Product Functions]
\label{obs:tensor_form_u}
If $f_{\bs}: [0,1]^d \rightarrow \C$ has the form $f_{\bs}(\bz) = \prod_{i \in u} g_i(z_i)$ for all $\bz \in Q_u$, then we have $\by^{f_{\bs},u}_{\bfj} = \prod_{i \in u} \by^{g_i}_{\bfj_i}$ for any $\bfj \in \{0, \ldots, 2^h-1\}^u$, where $\by^{g_i}$ is defined as in \cite{BJ25}. Consequently, we have $\by^{f_{\bs},u} = \otimes_{i \in u} \by^{g_i}$.
\end{observation}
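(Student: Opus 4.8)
The final statement is Observation~\ref{obs:tensor_form_u}, which asserts that the vector $\by^{f_{\bs},u}$ is a tensor product of the one-dimensional vectors $\by^{g_i}$ whenever $f_{\bs}$ factorizes over the coordinates in $u$. This is essentially a bookkeeping statement about how an iterated integral of a product function splits, so the plan is to simply unpack the definition of $\by^{f_{\bs},u}_{\bfj}$ and apply Fubini.

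\begin{proof}[Proof of Observation~\ref{obs:tensor_form_u}]
Fix $\bfj \in \{0,\ldots,2^h-1\}^u$ and write the box $[\bfj/2^h,(\bfj+\ind_u)/2^h]$ as a product $\prod_{i\in u} [\bfj_i/2^h,(\bfj_i+1)/2^h]$ of one-dimensional intervals. Since $f_{\bs}(\bz) = \prod_{i\in u} g_i(z_i)$ on $Q_u$, its mixed partial derivative over the coordinates in $u$ factorizes:
\[
\partial_u f_{\bs}(\bz^u,\ind_{\overline u}) = \frac{\partial^{|u|}}{\partial \bz^u}\prod_{i\in u} g_i(z_i) = \prod_{i\in u} g_i'(z_i).
\]
Substituting into the definition of $\by^{f_{\bs},u}_{\bfj}$ and applying Fubini's theorem to the product integrand over the product domain gives
\[
\by^{f_{\bs},u}_{\bfj} = \int_{\prod_{i\in u}[\bfj_i/2^h,(\bfj_i+1)/2^h]} \prod_{i\in u} g_i'(z_i)\, d\bz^u = \prod_{i\in u} \int_{\bfj_i/2^h}^{(\bfj_i+1)/2^h} g_i'(z_i)\, dz_i = \prod_{i\in u} \by^{g_i}_{\bfj_i},
\]
where the last equality is the definition of the one-dimensional vector $\by^{g_i}$ from \cite{BJ25} (namely $\by^{g_i}_{\ell} = \int_{\ell/2^h}^{(\ell+1)/2^h} g_i'(z)\,dz = g_i((\ell+1)/2^h) - g_i(\ell/2^h)$). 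Since this holds coordinatewise for every multi-index $\bfj$, it is exactly the statement $\by^{f_{\bs},u} = \otimes_{i\in u} \by^{g_i}$.
\end{proof}

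There is essentially no obstacle here: the only points requiring any care are (i) keeping the indexing convention straight, i.e. remembering that the coordinates of $\bfj$ outside $u$ are pinned to $2^h$ (equivalently, the argument $\ind_{\overline u}$), which is already flagged in the footnote before the observation, and (ii) justifying the interchange of integration order, which is immediate since $g_i'$ is continuous (the paper works with functions having continuous mixed derivatives) and the domain is a bounded box. The purpose of this observation in the broader argument is that it lets one reduce the analysis of $\langle \bfwd^{t,\mathcal{C},u}, \by^{f_{\bs},u}\rangle$ for each $u$ to the rank-one / tensor structure, so that the dyadic decomposition tensor $(\overline P_h)^{\otimes u}$ and the subgaussianity bound of Lemma~\ref{lem:subg_dyadic} can be applied factorwise; but that is the content of the subsequent steps, not of this observation itself.
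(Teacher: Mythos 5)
Your proof is correct and matches the paper, which simply states that the observation ``is immediate from the definition'' of $\by^{f_{\bs},u}$; your write-up is exactly that immediate unpacking (factor the mixed derivative $\partial_u$, split the integral over the product box via Fubini, and recognize each one-dimensional factor as $\by^{g_i}_{\bfj_i}$). No further comparison is needed.
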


This observation is immediate from the definition of $\by^{f_{\bs},u}$ above. Using dyadic decomposition, we may now express $\err_t^{\disc}$ as inner products
\begin{align} \label{eq:disc_err_high_dim}
\err^{\disc}_t(f_{\bs})
&=\sum_{\emptyset \neq u \subseteq [d]} (-1)^{|u|-1} \int_{Q_u} \frac{\wdisc_t(C_{\bz})}{n_t} \cdot \frac{\partial_u f_{\bs}(\bz)}{\gamma(u)} d \bz \nonumber\\
&= \frac{1}{n_t} \sum_{\emptyset \neq u \subseteq [d]}  (-1)^{|u|-1} \cdot \big\langle \bfwd^{t, \mathcal{C}, u} , \frac{\by^{f_{\bs},u}}{\gamma(u)} \big\rangle \nonumber \\
& = \frac{1}{n_t} \sum_{\emptyset \neq u \subseteq [d]}  (-1)^{|u|-1} \cdot \big\langle \bfwd^{t, \D, u} , (P_h^\top)^{\otimes u} \frac{\by^{f_{\bs},u}}{\gamma(u)} \big\rangle,
\end{align}
where $\bfwd^{t, \D, u} \in \Z^{|(\D_{\leq h} \setminus C_1)^{\otimes u}|}$ is the restriction of $\bfwd^{t,\D}$ to dyadic boxes in $Q_u$ excluding those with any dimension in $u$ being $(0,1]$.  It follows that the vectors $\bfwd^{t, \D, u}$ for different $\emptyset \neq u \subseteq [d]$ correspond to disjoint coordinates of $\bfwd^{t,\D}$. Consequently, by \Cref{lem:subg_dyadic},  for any outcome of the random shift $\bs$, $\err^{\disc}_t(f_{\bs})$ is also subgaussian with parameter 
\begin{align}\label{eq:disc_err_subg_him_dim}
&O\bigg(\frac{d\log (dn)\prod_{j\in [d]}(1+\gamma_j^2\log n)}{n_t^2} \cdot \sum_{\emptyset \neq u \subseteq [d]} \frac{1}{\gamma(u)^2} \big\|(P_h^\top)^{\otimes u} \by^{f_{\bs},u} \big\|_2^2\bigg) \nonumber \\
&\leq O\bigg(\frac{d\log (dn)\prod_{j\in [d]}(1+\gamma_j^2\log n)}{n_t^2} \cdot \sum_{\emptyset \neq u \subseteq [d]}  \frac{1}{\gamma(u)^2}  \big\|(\overline{P}_h^\top)^{\otimes u} \by^{f_{\bs},u} \big\|_2^2\bigg).
\end{align}

Therefore, we are left to derive an upper bound on  $\sum_{\emptyset \neq u \subseteq [d]} \frac{1}{\gamma(u)^2}\big\|(\overline{P}_h^\top)^{\otimes u} \by^{f_{\bs},u} \big\|_2^2$, for which we look at the Fourier transformation of $f_{\bs}$.

\smallskip
\noindent \textbf{Bounding the $\ell_2$ Norm via Fourier.} 
We may write 
\[
f_{\bs}(\bz) = \sum_{\bk \in \Z^d}\widehat{f_{\bs}}(\bk) \cdot e^{2 \pi i \langle \bk , \bz \rangle} =: \sum_{\bk \in \Z^d}\widehat{f_{\bs}}(\bk) \cdot e_{\bk}(\bz) ,
\]
where $e_{\bk}(\bz) := \exp(2 \pi i \langle \bk, \bz\rangle)$. 
Again, by replacing $f_{\bs}$ with $f_{\bs} - \widehat{f}(\mathbf{0})$, we may assume wlog that $\widehat{f_{\bs}}(\mathbf{0}) = 0$. Then we may write $\by^{f_{\bs},u} = \sum_{\bk \in \Z^d \setminus \mathbf{0}} \widehat{f_{\bs}}(\bk) \cdot \by^{e_{\bk}, u}$. 
Note that the vector $\by^{e_{\bk}, u}$ are the same for the set of $e_{\bk}$ for which $k_u$ coincide.

The following orthogonality property of the high dimensional Fourier coefficients is a simple corollary of its one dimensional version in \cite{BJ25}.

\begin{corollary}[Fourier Orthogonality]
\label{cor:orthogonality_fourier_high_dim}
For any $\bk \neq \bk' \in \mathbb{Z}^d$, we have $\E_{\bs}\big[ \widehat{f_{\bs}}(\bk)^* \widehat{f_{\bs}}(\bk')\big] = 0$. 
\end{corollary}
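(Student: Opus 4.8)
The plan is to reduce the $d$-dimensional statement to the one-dimensional Fourier orthogonality result already established in \cite{BJ25}. Recall that $f_{\bs}(\bz) = f(\bz + \bs)$, so by the standard Fourier shift identity, $\widehat{f_{\bs}}(\bk) = \widehat{f}(\bk)\, e^{2\pi i \langle \bk, \bs\rangle}$ for every $\bk \in \Z^d$. Hence
\[
\E_{\bs}\big[\widehat{f_{\bs}}(\bk)^* \widehat{f_{\bs}}(\bk')\big] = \widehat{f}(\bk)^* \widehat{f}(\bk') \cdot \E_{\bs}\big[e^{2\pi i \langle \bk' - \bk, \bs\rangle}\big],
\]
and the task is to show the remaining expectation vanishes whenever $\bk \neq \bk'$.

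First I would note that $\bs = (s_1, \ldots, s_d)$ has independent coordinates, each uniform on $[0,1)$, so the expectation factorizes as $\E_{\bs}\big[e^{2\pi i \langle \bk'-\bk,\bs\rangle}\big] = \prod_{j=1}^d \E_{s_j}\big[e^{2\pi i (k_j' - k_j) s_j}\big]$. Each one-dimensional factor is $\int_0^1 e^{2\pi i (k_j' - k_j) t}\, dt$, which equals $1$ if $k_j' = k_j$ and $0$ otherwise, by the elementary integral of a nonconstant complex exponential over a full period. Since $\bk \neq \bk'$, there is at least one index $j$ with $k_j \neq k_j'$, contributing a zero factor, so the whole product vanishes. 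This is exactly the high-dimensional analogue of the one-dimensional orthogonality statement in \cite{BJ25}, applied coordinatewise, which is why it is stated as a corollary.

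There is essentially no obstacle here; the only point requiring a word of care is that the "random shift" in the $\wSubgTrans$ algorithm folds coordinates exceeding $1$ back into $[0,1)$, so one should confirm that this folding is compatible with $1$-periodicity of $f$ (assumed without loss of generality, as discussed before the proof of \Cref{thm:main1}): for a $1$-periodic $f$, shifting by $\bs$ and then folding coordinates is literally the same as evaluating $f(\bz+\bs)$, so the identity $\widehat{f_{\bs}}(\bk) = \widehat{f}(\bk)e^{2\pi i \langle \bk,\bs\rangle}$ holds on the nose. With that remark, the corollary follows immediately from independence of the shift coordinates and the one-dimensional computation.
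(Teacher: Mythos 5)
Your proof is correct and matches the route the paper intends: the paper simply declares the statement a direct consequence of the one-dimensional orthogonality in \cite{BJ25}, and your argument---the shift identity $\widehat{f_{\bs}}(\bk)=\widehat{f}(\bk)e^{2\pi i\langle\bk,\bs\rangle}$ followed by coordinatewise factorization of $\E_{\bs}[e^{2\pi i\langle\bk'-\bk,\bs\rangle}]$ using independence of the uniform shift coordinates---is exactly that reduction, with the folding/periodicity remark being a correct but minor point of care.
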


Now we can compute the $\ell_2$ norm as follows.
\begin{align*}
\E_{\bs} \Big[\Big\| (\overline{P}^\top_h)^{\otimes u} \cdot \by^{f_{\bs},u} \Big\|_2^2\Big] 
& = \E_{\bs} \Big[\Big| (\overline{P}^\top_h)^{\otimes u} \cdot \Big( \sum_{\bk\neq \mathbf{0}} \widehat{f_{\bs}}(\bk) \by^{e_{\bk},u} \Big) \Big|^2 \Big] \\
& = \sum_{\bk \neq \mathbf{0}} \big | \widehat{f}(\bk) \big|^2 \cdot \prod_{j \in u} |\overline{P}^\top_h \by^{e_{k_j}} |^2 
\leq  \sum_{\bk\neq \mathbf{0}} \big|\widehat{f}(\bk) \big|^2  \cdot \prod_{j \in u} |k_j| ,
\end{align*}
where the expectation over $\bs$ is removed in the second line since the modulus of the Fourier coefficients do not depend on the random shift, and the last step follows from the estimate in \cite[Lemma 5.3]{BJ25}.

Now summing over all $\emptyset \neq u \subseteq [d]$ gives the following $\ell_2$ bound
\begin{align} \label{eq:ell_2_bound_high_dim}
\E_{\bs} \Big[\sum_{\emptyset \neq u \subseteq [d]} \frac{1}{\gamma(u)^2}\Big\| (\overline{P}^\top_h)^{\otimes u}  \by^{f_{\bs},u} \Big\|_2^2 \Big]
& \leq \sum_{\bk\neq \mathbf{0}} \big|\widehat{f}(\bk) \big|^2 \bigg(\sum_{\emptyset \neq u \subseteq [d]} \frac{1}{\gamma(u)^2}   \prod_{j \in u} |k_j| \bigg) \nonumber\\
&=\sum_{\bk\neq \mathbf{0}} \big|\widehat{f}(\bk) \big|^2 \bigg(\sum_{\emptyset \neq u \subseteq [d]}   \prod_{j \in u} \frac{|k_j|}{\gamma_j^2} \bigg) \nonumber \\
&=:\sigma_{\WSO}(f)^2 .
\end{align}
Using the $\ell_2$-norm bound \eqref{eq:ell_2_bound_high_dim} in \eqref{eq:disc_err_high_dim} and \eqref{eq:disc_err_subg_him_dim}, we obtain the bound
\begin{align} \label{eq:disc_var_high_dim}
\E \Big[ \big(\sum_{t=0}^{T-1} \err^{\disc}_t(f_{\bs})\big)^2 \Big] 
\leq O\bigg(\frac{d\log (dn)\prod_{j\in [d]}(1+\gamma_j^2\log n)}{n^2} \cdot \sigma_{\WSO}(f)^2 \bigg).
\end{align}

Plugging \eqref{eq:disc_var_high_dim} and \eqref{eq:proof_aux1} into \eqref{eq:error_decomp_high_dim}, we obtain that
\begin{align*}
\E[(\err(A_T,f_{\bfs}))^2] & \leq \E[(\err(A_0,f_{\bfs}))^2] + \E \Big[ \big(\sum_{t=0}^{T-1} \err^{\disc}_t(f_{\bs})\big)^2 \Big] \\
& \leq  \frac{\sigma(f)^2}{n^2}+ O\bigg(\frac{d\log (dn)\prod_{j\in [d]}(1+\gamma_j^2\log n)}{n^2} \cdot \sigma_{\WSO}(f)^2 \bigg) \\
&= O\bigg(\frac{d\log (dn)\prod_{j\in [d]}(1+\gamma_j^2\log n)}{n^2} \cdot \sigma_{\WSO}(f)^2 \bigg).
\end{align*}
This is the error guaranteed by the algorithm in \cite{BJ25}, applied for the weighted discrepancy. This completes the proof of the theorem. 
\end{proof}

\subsection{Proofs of results in Section \ref{sec:effective_dimension}}\label{app:proof_2}

\begin{proof}[Proof of Theorem \ref{thm:superposition}]

Let $\F_{s}$ be the class of functions involving interactions of order at most $s$, i.e., $\F_{s}$ consists functions of the form $\sum_{|u|\le s} f_{u}(\bz^u)$. Fix an arbitrary $1$-periodic function \[
f(\bz)=\sum_{|u|\le s} f_{u}(\bz^u)\in \F_{s}
\]
with a continuous mixed derivative. We can equivalently view $\wSubgTrans$ as fixing $\D_{\leq h}^{\otimes d}$ and applying a random shift $\bs \sim [0,1)^d$ to $f$ which results in $f_{\bs}(\bz) = f(\bz + \bs)=\sum_{|u|\le s} f_{u}(\bz^u+\bs^{u})$. Denote $\E_{\bs}$ the expectation w.r.t. random shift $\bs$. By Hlawka-Zaremba formula, the integration error using point set $A_T$ is
\begin{align*}
\err(A_T,f_{\bs}) &= \sum_{\emptyset \neq u \subseteq [d]} (-1)^{|u|-1} \int_{Q_u} h_T(\bz) \cdot \partial_u f_{\bs}(\bz)  d \bz \\
&=\sum_{\emptyset \neq u \subseteq [d]} (-1)^{|u|-1} \int_{Q_u} h_T(\bz) \cdot \partial_u \bigg(\sum_{|u'|\le s} f_{u'}(\bz^{u'}+\bs^{u'})\bigg)  d \bz\\
&=\sum_{0<|u|\le s} (-1)^{|u|-1} \int_{Q_u} h_T(\bz) \cdot \partial_u \bigg(\sum_{u': u\subseteq u'} f_{u'}(\bz^{u'}+\bs^{u'})\bigg)  d \bz.
\end{align*}
By Lemma \ref{lem:transference}, we can write
\begin{align}\label{eq:error_decomp_high_dim_2}
\err(A_T,f_{\bs})
&=\sum_{0<|u|\le s} (-1)^{|u|-1} \int_{Q_u} h_T(\bz) \cdot \partial_u \bigg(\sum_{u': u\subseteq u'} f_{u'}(\bz^{u'}+\bs^{u'})\bigg)  d \bz \nonumber\\
&=\sum_{0<|u|\le s} (-1)^{|u|-1} \int_{Q_u} h_0(\bz) \cdot \partial_u \bigg(\sum_{u': u\subseteq u'} f_{u'}(\bz^{u'}+\bs^{u'})\bigg)  d \bz \nonumber\\
&\quad +\sum_{t=0}^{T-1}\bigg(\sum_{0<|u|\le s} (-1)^{|u|-1} \int_{Q_u} \frac{\disc_t(\bz)}{n_t} \cdot \partial_u \bigg(\sum_{u': u\subseteq u'} f_{u'}(\bz^{u'}+\bs^{u'})\bigg)  d \bz\bigg) \nonumber\\
&=:\err(A_0,f_{\bs}) + \sum_{t=0}^{T-1} \err^{\disc}_t(f_{\bs}).
\end{align}

First, since $A_0$ consists of $n^2$ independent random samples,
\begin{align}\label{eq:proof2_aux1}
\E[(\err(A_0,f_{\bfs}))^2]\le \frac{\sigma(f)^2}{n^2}.
\end{align}

We next analyze the discrepancy error $\err^{\disc}_t$ in \eqref{eq:error_decomp_high_dim_2}. Using the same notation and argument as in the proof of Theorem \ref{thm:main1} in Appendix \ref{app:proof_1}, we further write
\begin{align}\label{eq:disct-d_2}
    \err^{\disc}_{t}(f_{\bs}) &=\sum_{0<|u| \le s}\sum_{u': u \subseteq u'} (-1)^{|u|-1} \int_{Q_u} \frac{\disc_t(\bz)}{n_t} \cdot \partial_u  f_{u'}(\bz^{u'}+\bs^{u'})  d \bz\nonumber\\
    &=\frac{1}{n_t} \sum_{0<|u| \le s}\sum_{u': u \subseteq u'}  (-1)^{|u|-1} \cdot \big\langle \bfd^{t, \mathcal{C}, u} ,\by^{f_{u',\bs},u} \big\rangle \nonumber \\
& = \frac{1}{n_t} \sum_{0<|u| \le s}\sum_{u': u \subseteq u'}  (-1)^{|u|-1} \cdot \big\langle \bfd^{t, \D, u} , (P_h^\top)^{\otimes u} \by^{f_{u',\bs},u} \big\rangle\nonumber\\
&=  \frac{1}{n_t} \sum_{0<|u|\le s} (-1)^{|u|-1}    \cdot \big\langle \bfd^{t, \D, u} , (P_h^\top)^{\otimes u} (\sum_{u':u \subseteq u'}\by^{f_{u',\bs},u} )\big\rangle,
\end{align}
where $f_{u',\bs}(\bz):=f_{u'}(\bz^{u'}+\bs^{u'})$, and $\bfd^{t, \D, u} \in \Z^{|(\D_{\leq h} \setminus C_1)^{\otimes u}|}$ is the restriction of $\bfd^{t,\D}$ to dyadic boxes in $Q_u$ excluding those with any dimension in $u$ being $(0,1]$. It follows that the vectors $\bfd^{t, \D, u}$ for different $u$ with $0<|u|\le s$ correspond to disjoint coordinates of the vector $\bfd^{t,\D}|_{\widetilde{\D}_{\leq h}^{\otimes s}}$, which is the restriction of $\bfd^{t,\D}$  to $\widetilde{\D}_{\leq h}^{\otimes s}$.

Recall that in our modified $\wSubgTrans$ algorithm, the combinatorial discrepancy vector is defined with respect to $\widetilde{\D}_{\leq h}^{\otimes s}$ rather than the full collection $\D_{\leq h}^{\otimes d}$. Moreover,
\begin{align*}
\|\bv_j\|_2^2 =1+\sum_{B\in  \widetilde{\D}_{\leq h}^{\otimes s} } \ind_{\{\bz_j \in B\}}=1+\sum_{u\subseteq [d]:|u|\le s} h^{|u|} =1+\sum_{k=0}^{s}\binom{d}{k}h^{k}=O_s(d^sh^s)=O_{s}\big((d\log (dn))^s\big).
\end{align*}
Analogous to Lemma~\ref{lem:subg_dyadic}, we conclude that for any $0\le t\le T-1$, the random vector $(\bc_t, \bfd^{t,\D}|_{\widetilde{\D}_{\leq h}^{\otimes s}})$ is $O_s\big(d^s (\log (dn))^{s+1} \big)$-subgaussian.

Consequently, for any outcome of the random shift $\bs$, $\err^{\disc}_t(f_{\bs})$ in \eqref{eq:disct-d_2} is also subgaussian with parameter 
\begin{align}\label{eq:disc_err_subg_him_dim_2}
&O_s\bigg(\frac{d^s (\log (dn))^{s+1}}{n_t^2} \cdot \sum_{0<|u|\le s}  \big\|(P_h^\top)^{\otimes u} (\sum_{u':u \subseteq u'}\by^{f_{u',\bs},u} ) \big\|_2^2\bigg) \nonumber \\
&\leq O_s\bigg(\frac{d^s (\log (dn))^{s+1}}{n_t^2} \cdot \sum_{0<|u|\le s}   \big\|(\overline{P}_h^\top)^{\otimes u} (\sum_{u':u \subseteq u'}\by^{f_{u',\bs},u} ) \big\|_2^2\bigg).
\end{align}

For a fixed subset $u\subseteq [d]$ with $0<|u|\le s$, we have 
\begin{align*}
\E_{\bs} \Big[ \big\|(\overline{P}_h^\top)^{\otimes u} (\sum_{u':u \subseteq u'}\by^{f_{u',\bs},u} ) \big\|_2^2\Big] 
& = \E_{\bs} \Big[\Big| (\overline{P}^\top_h)^{\otimes u} \cdot \Big( \sum_{u':u \subseteq u'}\sum_{\bk\neq \mathbf{0}} \widehat{f_{u',\bs}}(\bk) \by^{e_{\bk},u} \Big) \Big|^2 \Big] \\
&= \E_{\bs} \Big[\Big| (\overline{P}^\top_h)^{\otimes u} \cdot \Big( \sum_{\bk\neq \mathbf{0}} \Big(\sum_{u':u \subseteq u'}\widehat{f_{u',\bs}}(\bk)\Big) \by^{e_{\bk},u} \Big) \Big|^2 \Big]\\
& = \sum_{\bk \neq \mathbf{0}} \big | \sum_{u':u \subseteq u'}\widehat{f_{u'}}(\bk) \big|^2 \cdot \prod_{j \in u} |\overline{P}^\top_h \by^{e_{k_j}} |^2 \\
&\leq  \sum_{\bk\neq \mathbf{0}} \big|\sum_{u':u \subseteq u'}\widehat{f_{u'}}(\bk) \big|^2  \cdot \prod_{j \in u} |k_j| ,
\end{align*}
where we have used the Fourier orthogonality (Corollary \ref{cor:orthogonality_fourier_high_dim}) and the estimate \cite[Lemma 5.3]{BJ25}, and the expectation over $\bs$ is removed in the third line since the modulus of the Fourier coefficients do not depend on the random shift.

Now summing over all subsets $u\subseteq [d]$ satisfying $0<|u|\le s$ gives the following $\ell_2$ bound
\begin{align}\label{eq:ell_2_bound_high_dim_2}
\E_{\bs} \Big[ \sum_{0<|u|\le s}\big\|(\overline{P}_h^\top)^{\otimes u} (\sum_{u':u \subseteq u'}\by^{f_{u',\bs},u} ) \big\|_2^2\Big]\le  \sum_{0<|u|\le s} \sum_{\bk\neq \mathbf{0}} \big|\sum_{u':u \subseteq u'}\widehat{f_{u'}}(\bk) \big|^2  \cdot \prod_{j \in u} |k_j|.
\end{align}

For $\bk=(k_1,\ldots,k_d)\in \Z^d$, we denote its support by $\mathrm{supp}(\bk):=\{j\in [d]:k_j\neq 0\}$. By the definition of Fourier transform, $\widehat{f_{u'}}(\bk)=0$ if $\mathrm{supp}(\bk)$ is not a subset of $u'$. Therefore,
\begin{align}\label{eq:proof2_aux2}
    \sum_{0<|u|\le s} \sum_{\bk\neq \mathbf{0}} \big|\sum_{u':u \subseteq u'}\widehat{f_{u'}}(\bk) \big|^2  \cdot \prod_{j \in u} |k_j|&=  \sum_{\bk\neq \mathbf{0}} \sum_{0<|u|\le s} \big|\sum_{u':u \subseteq u'}\widehat{f_{u'}}(\bk) \big|^2  \cdot \prod_{j \in u} |k_j| \nonumber\\
    &=\sum_{\bk\neq \mathbf{0}} \sum_{0<|u|\le s, u\subseteq \mathrm{supp}(\bk) } \big|\sum_{u':u \subseteq u', \mathrm{supp}(\bk)\subseteq u' }\widehat{f_{u'}}(\bk) \big|^2  \cdot \prod_{j \in u} |k_j| \nonumber\\
    &=\sum_{\bk\neq \mathbf{0}} \sum_{0<|u|\le s, u\subseteq \mathrm{supp}(\bk) } \big|\sum_{u':\mathrm{supp}(\bk)\subseteq u' }\widehat{f_{u'}}(\bk) \big|^2  \cdot \prod_{j \in u} |k_j| \nonumber\\
    &=\sum_{\bk\neq \mathbf{0}} \big|\sum_{u':\mathrm{supp}(\bk)\subseteq u' }\widehat{f_{u'}}(\bk) \big|^2  \bigg(\sum_{0<|u|\le s, u\subseteq \mathrm{supp}(\bk) }  \prod_{j \in u} |k_j|\bigg) \nonumber\\
    &\overset{(\star)}{=}\sum_{\bk\neq \mathbf{0}} \big|\sum_{u':\mathrm{supp}(\bk)\subseteq u' }\widehat{f_{u'}}(\bk) \big|^2\bigg(\sum_{\emptyset \neq u \subseteq [d]}   \prod_{j \in u}|k_j|\bigg) \nonumber\\
    &=\sum_{\bk\neq \mathbf{0}} \big|\sum_{|u'|\le s}\widehat{f_{u'}}(\bk) \big|^2\bigg(\sum_{\emptyset \neq u \subseteq [d]}   \prod_{j \in u}|k_j|\bigg) \nonumber\\
    &=\sum_{\bk\neq \mathbf{0}} \big|\widehat{f}(\bk) \big|^2\bigg(\sum_{\emptyset \neq u \subseteq [d]}   \prod_{j \in u}|k_j|\bigg) \nonumber\\
    &=: \sigma_{\SO}(f)^2,
\end{align}
where $(\star)$ follows because the sum over $\bk\neq \mathbf{0}$ effectively restricts to frequencies with $|\mathrm{supp}(\bk)|\le s$. Hence, $\sum_{0<|u|\le s, u\subseteq \mathrm{supp}(\bk) }  \prod_{j \in u} |k_j|=\sum_{\emptyset \neq u\subseteq \mathrm{supp}(\bk) }  \prod_{j \in u} |k_j|=\sum_{\emptyset \neq u \subseteq [d]}   \prod_{j \in u}|k_j|$.

Using the $\ell_2$-norm bound \eqref{eq:ell_2_bound_high_dim_2} and the equality \eqref{eq:proof2_aux2} in \eqref{eq:disct-d_2} and \eqref{eq:disc_err_subg_him_dim_2}, we obtain the bound
\begin{align} \label{eq:disc_var_high_dim_2}
\E \Big[ \big(\sum_{t=0}^{T-1} \err^{\disc}_t(f_{\bs})\big)^2 \Big] 
\leq O_s\bigg(\frac{d^s (\log (dn))^{s+1}}{n^2} \cdot \sigma_{\SO}(f)^2 \bigg).
\end{align}

Plugging \eqref{eq:disc_var_high_dim_2} and \eqref{eq:proof2_aux1} into \eqref{eq:error_decomp_high_dim_2}, we obtain that for any $f(\bz)=\sum_{|u|\le s} f_{u}(\bz^u)\in \F_s$,
\begin{align}\label{eq:err_h}
\E[(\err(A_T,f_{\bfs}))^2] & \leq \E[(\err(A_0,f_{\bfs}))^2] + \E \Big[ \big(\sum_{t=0}^{T-1} \err^{\disc}_t(f_{\bs})\big)^2 \Big] \nonumber \\
& \leq  \frac{\sigma(f)^2}{n^2}+ O_s\bigg(\frac{d^s (\log (dn))^{s+1}}{n^2} \cdot \sigma_{\SO}(f)^2 \bigg) \nonumber\\
&= O_s\bigg(\frac{d^s (\log (dn))^{s+1}}{n^2} \cdot \sigma_{\SO}(f)^2 \bigg).
\end{align}

A slight modification of \cite[Lemma 4.3]{BJ25} yields that, for any $f:[0,1]^d\to \R$,
\begin{align}\label{eq:err_g}
\E[(\err(A_T,f))^2]\le O\Big(\frac{\log n}{n}\sigma(f)^2\Big).
\end{align}

For any decomposition $f=g+h$ with $g,h: [0,1]^d\to \R$ and $h\in \F_{s}$, we have $\err(A_T,f)=\err(A_T,g)+\err(A_T,h)$. The theorem then follows directly from \eqref{eq:err_h} and \eqref{eq:err_g}.
\end{proof}

\begin{proof}[Proof of Theorem \ref{thm:truncation}]
   In this case, $\gamma_1=\cdots =\gamma_s=1,\gamma_{s+1}=\cdots=\gamma_{d}=0$, and thus
\[
\|\bv_j\|_2^2=1+\prod_{j\in [d]}(1+h\gamma_j^2)=O(h^{s})=O((\log (dn))^s).
\]
Analogous to Lemma~\ref{lem:subg_dyadic}, we conclude that for any $0\le t\le T-1$, the random vector $(\bc_t, \bfwd^{t,\D})$ is $O_s((\log (d n))^{s+1})$-subgaussian. The result follows from a slight modification of the arguments in the proofs of Theorems \ref{thm:main1} and \ref{thm:superposition}; we omit the proof for brevity.
\end{proof}

\end{document}